\DeclareMathOperator{\sign}{sgn}
\DeclareMathOperator{\erfc}{erfc}
\begin{document}
\title
{Shadow boundary effects in hybrid numerical-asymptotic methods for high frequency scattering}
\author{D.~P.~Hewett\\
\footnotesize{Department of Mathematics and Statistics, University of Reading, UK}\\
\footnotesize{(Current address: Mathematical Institute, University of Oxford, UK)}\\
\footnotesize{Email: hewett@maths.ox.ac.uk}
}
\maketitle
\newcommand{\done}[2]{\dfrac{d {#1}}{d {#2}}}
\newcommand{\donet}[2]{\frac{d {#1}}{d {#2}}}
\newcommand{\pdone}[2]{\dfrac{\partial {#1}}{\partial {#2}}}
\newcommand{\pdonet}[2]{\frac{\partial {#1}}{\partial {#2}}}
\newcommand{\pdonetext}[2]{\partial {#1}/\partial {#2}}
\newcommand{\pdtwo}[2]{\dfrac{\partial^2 {#1}}{\partial {#2}^2}}
\newcommand{\pdtwot}[2]{\frac{\partial^2 {#1}}{\partial {#2}^2}}
\newcommand{\pdtwomix}[3]{\dfrac{\partial^2 {#1}}{\partial {#2}\partial {#3}}}
\newcommand{\pdtwomixt}[3]{\frac{\partial^2 {#1}}{\partial {#2}\partial {#3}}}
\newcommand{\bs}[1]{\mathbf{#1}}
\newcommand{\bx}{\mathbf{x}}
\newcommand{\by}{\mathbf{y}}
\newcommand{\bd}{\mathbf{d}} 
\newcommand{\bn}{\mathbf{n}} 
\newcommand{\bP}{\mathbf{P}} 
\newcommand{\bp}{\mathbf{p}} 
\newcommand{\ol}[1]{\overline{#1}}
\newcommand{\rf}[1]{(\ref{#1})}
\newcommand{\xt}{\mathbf{x},t}
\newcommand{\hs}[1]{\hspace{#1mm}}
\newcommand{\vs}[1]{\vspace{#1mm}}
\newcommand{\eps}{\varepsilon}
\newcommand{\ord}[1]{\mathcal{O}\left(#1\right)} 
\newcommand{\oord}[1]{o\left(#1\right)}
\newcommand{\Ord}[1]{\Theta\left(#1\right)}
\newcommand{\PhiF}{\Phi_{\rm freq}}
\newcommand{\real}[1]{{\rm Re}\left[#1\right]} 
\newcommand{\im}[1]{{\rm Im}\left[#1\right]}
\newcommand{\hsnorm}[1]{||#1||_{H^{s}(\bs{R})}}
\newcommand{\hnorm}[1]{||#1||_{\tilde{H}^{-1/2}((0,1))}}
\newcommand{\norm}[2]{\left\|#1\right\|_{#2}}
\newcommand{\normt}[2]{\|#1\|_{#2}}
\newcommand{\on}[1]{\Vert{#1} \Vert_{1}}
\newcommand{\tn}[1]{\Vert{#1} \Vert_{2}}
\newcommand{\ts}{\tilde{s}}
\newcommand{\tGamma}{{\tilde{\Gamma}}}
\newcommand{\darg}[1]{\left|{\rm arg}\left[ #1 \right]\right|}
\newcommand{\bnabla}{\boldsymbol{\nabla}}
\newcommand{\dive}{\boldsymbol{\nabla}\cdot}
\newcommand{\curl}{\boldsymbol{\nabla}\times}
\newcommand{\Phixy}{\Phi(\bx,\by)}
\newcommand{\PhiOxy}{\Phi_0(\bx,\by)}
\newcommand{\dxPhixy}{\pdone{\Phi}{n(\bx)}(\bx,\by)}
\newcommand{\dyPhixy}{\pdone{\Phi}{n(\by)}(\bx,\by)}
\newcommand{\dxPhiOxy}{\pdone{\Phi_0}{n(\bx)}(\bx,\by)}
\newcommand{\dyPhiOxy}{\pdone{\Phi_0}{n(\by)}(\bx,\by)}

\newcommand{\rd}{\mathrm{d}}
\newcommand{\R}{\mathbb{R}}
\newcommand{\N}{\mathbb{N}}
\newcommand{\Z}{\mathbb{Z}}
\newcommand{\C}{\mathbb{C}}
\newcommand{\K}{{\mathbb{K}}}
\newcommand{\ri}{{\mathrm{i}}}
\newcommand{\re}{{\mathrm{e}}} 

\newcommand{\cA}{\mathcal{A}}
\newcommand{\cC}{\mathcal{C}}
\newcommand{\cS}{\mathcal{S}}
\newcommand{\cD}{\mathcal{D}}
\newcommand{\cone}{{c_{j}^\pm}}
\newcommand{\ctwo}{{c_{2,j}^\pm}}
\newcommand{\cthree}{{c_{3,j}^\pm}}

\newtheorem{thm}{Theorem}[section]
\newtheorem{lem}[thm]{Lemma}
\newtheorem{defn}[thm]{Definition}
\newtheorem{prop}[thm]{Proposition}
\newtheorem{cor}[thm]{Corollary}
\newtheorem{rem}[thm]{Remark}
\newtheorem{conj}[thm]{Conjecture}
\newtheorem{ass}[thm]{Assumption}
\definecolor{cqcqcq}{rgb}{0.9,0.9,0.9}
\newcommand{\cR}{\mathcal{R}}
\newcommand{\bQ}{\mathbf{Q}}
\newcommand{\bR}{\mathbf{R}}
\newcommand{\bS}{\mathbf{S}}
\newcommand{\xSB}{\mathbf{x}_{\rm SB}}
\newcommand{\tx}{\tilde{x}}
\newcommand{\tbx}{\tilde{\mathbf{x}}}
\newcommand{\Fr}{{\rm Fr}}
\newcommand{\dhc}[1]{{\color{red}{#1}}}
\begin{abstract}
The hybrid numerical-asymptotic (HNA) approach aims to reduce the computational cost of conventional numerical methods for high frequency wave scattering problems by enriching the numerical approximation space with oscillatory basis functions, chosen based on partial knowledge of the high frequency solution asymptotics. In this paper we propose a new methodology for the treatment of shadow boundary effects in HNA boundary element methods, using the classical geometrical theory of diffraction phase functions combined with mesh refinement. We develop our methodology in the context of scattering by a class of sound-soft nonconvex polygons, presenting a rigorous numerical analysis (supported by numerical results) which proves the effectiveness of our HNA approximation space at high frequencies. Our analysis is based on a study of certain approximation properties of the Fresnel integral and related functions, which govern the shadow boundary behaviour. 
\end{abstract}
%
%
\section{Introduction}
\label{sec:Intro}
The efficient numerical solution of wave scattering problems for the Helmholtz equation
\begin{align}
\label{eqn:HE}
(\Delta + k^2)u = 0, \qquad k>0,
\end{align}
is important in many areas of science and engineering. 
Conventional finite element method (FEM) or boundary element method (BEM) approaches with piecewise polynomial approximation spaces suffer from the limitation that a fixed number of degrees of freedom $M$ are required per wavelength in order to accurately represent the oscillations in the scattered wave, with $M=10$ being the accepted guideline in the engineering literature (see, e.g.~\cite{PlaneWave} and the references therein). This means that if $L$ is a linear dimension of the (bounded) scattering object, $\lambda$ is the wavelength and $k=\tfrac{2\pi}{\lambda}$ is the wavenumber (proportional to frequency), then the total number of degrees of freedom required is at least proportional to $(MkL)^d$ for FEM and $(MkL)^{d-1}$ for BEM, where $d=2$ or $3$ is the number of space dimensions of the problem. As a result, when $kL$ is large (as is the case in many applications), these conventional approaches are computationally expensive.

Recent research has led to the development of a number of novel FEM/BEM approaches which aim to reduce the number of degrees of freedom required when $kL$ is large by enriching the conventional piecewise polynomial approximation spaces with oscillatory basis functions (see e.g.\ \cite{PlaneWave,ChGrLaSp:11} and the many references therein). 
Our focus in this article is on the so-called hybrid numerical-asymptotic (HNA) BEM approach (recently reviewed in \cite{ChGrLaSp:11}), in which oscillatory BEM basis functions are chosen using partial knowledge of the high frequency solution asymptotics. These asymptotics can be understood within the context of Keller's celebrated geometrical theory of diffraction (GTD) (see e.g.\ \cite{KellerGTD,Borovikov}), in which the wave field is expressed as a sum of leading order geometrical optics (GO) components (the incident and reflected fields) and higher order diffracted components. 
In the HNA approach, each of the GTD components is represented in the BEM approximation space by an appropriate oscillatory function (chosen \emph{a priori}) multiplied by a piecewise polynomial amplitude (to be determined by the BEM) - for details see \cite{ChGrLaSp:11}. 

Since the nature and complexity of the HNA approximation space is linked to that of the underlying high frequency asymptotics, the HNA approach has been applied so far only to a limited number of problems for which these asymptotics are relatively simple (mostly 2D problems, with the exception of \cite{GH11} and \cite[\S7.6]{ChGrLaSp:11}, and mostly convex scatterers, with the exception of \cite{NonConvex}). But for many such problems (e.g.,\ scattering by sound-soft smooth convex obstacles in 2D \cite{DGS07,AsHu:10}, convex \cite{Convex,HeLaMe:11} and nonconvex \cite{NonConvex} polygons and 2D planar screens \cite{ScreenBEM} - see \cite{ChGrLaSp:11} for further examples) the HNA approach has proved to be very effective, providing a dramatic reduction in the number of degrees of freedom required when $kL$ is large, and in some cases even frequency-independent computational cost (when the numerical integration required for practical implementation is carried out using appropriate oscillatory integration routines), see e.g.\ \cite{ScreenBEM}. 


One of the key difficulties one encounters when attempting to apply the HNA methodology to more complex scattering problems involving nonconvex and/or 3D scatterers, is the need to deal with the complicated solution behaviour that occurs near the \emph{shadow boundaries} across which GTD components switch on/off (for an example see Figure~\ref{fig:total1}). Near such shadow boundaries the classical GTD approximation breaks down: on a shadow boundary the phase of the GTD component being switched on/off coincides with that of another higher-order GTD component, and the diffraction coefficient of this higher-order component blows up to infinity. The full wave solution varies smoothly (but rapidly) across such shadow boundaries, but to capture this rapid variation in an asymptotic approximation one has to employ more complicated uniform approximations involving the exact solutions of appropriate canonical diffraction problems which capture the shadow boundary behaviour in question, if such solutions are available (see e.g.\ \cite{UTD,Borovikov,TeChKiOcSmZa:00,FLP}). 


\begin{figure}[t]
\begin{center}
\subfigure[$\alpha=\tfrac{5\pi}{6}$ \label{fig:IncShad}]{
\begin{tikzpicture}[>=triangle 45,x=1.0cm,y=1.0cm, scale=1]
\node (0,0) {\includegraphics[width=6cm]{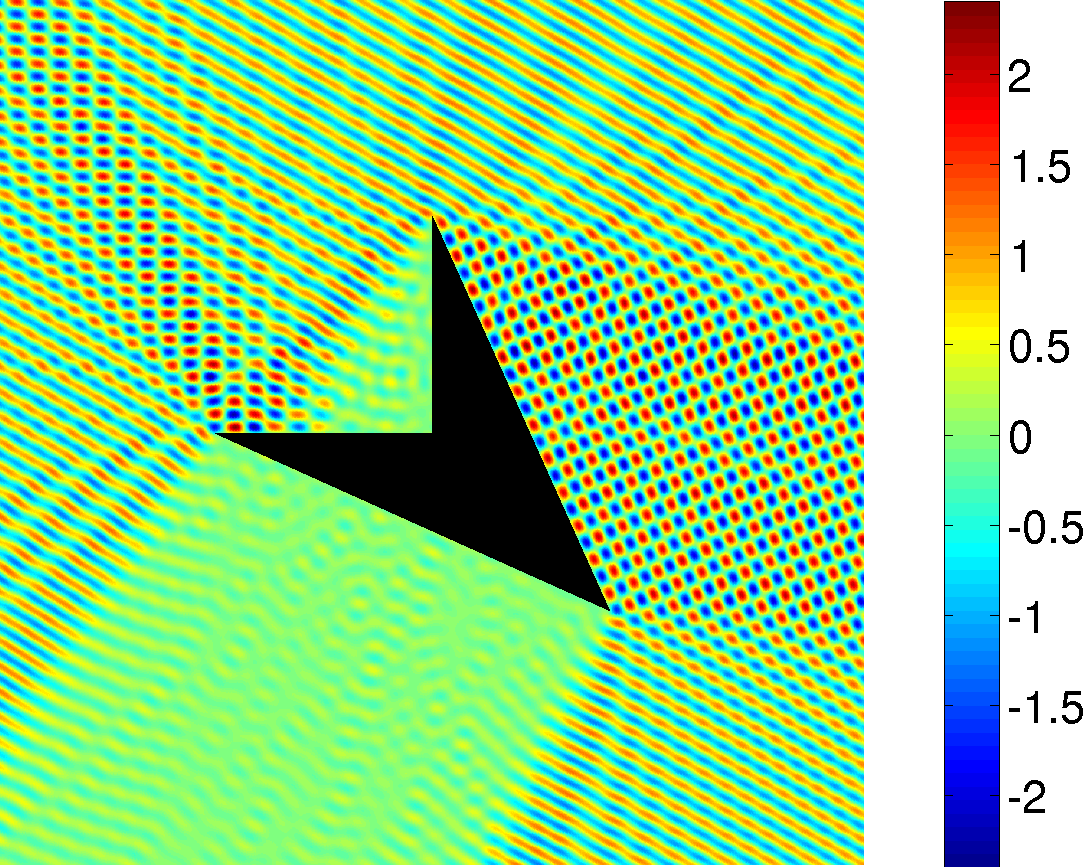}};
\draw[->,thick] (0.05,2.3) -- ++(-0.7*0.707,-0.7*1.225);
\draw (0.1,1.8) node {$\mathbf{d}$};
\draw[dashed,thick] (-0.6,1.14)--++(-1*0.707,-1*1.225);
\end{tikzpicture}
}
\subfigure[$\alpha=\tfrac{7\pi}{6}$ \label{fig:RefShad}]{
\begin{tikzpicture}[>=triangle 45,x=1.0cm,y=1.0cm, scale=1]
\node (0,0) {\includegraphics[width=6cm]{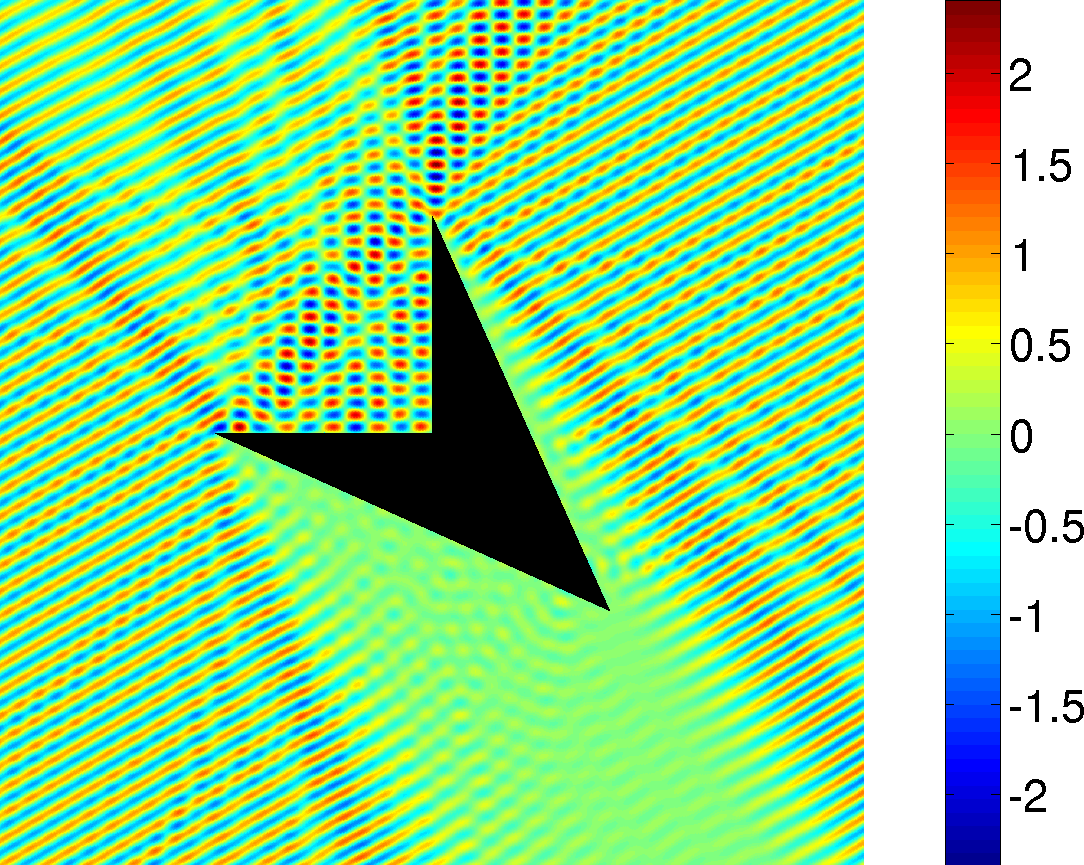}};
\draw[->,thick] (-1.25,2.3) -- ++(0.7*0.707,-0.7*1.225);
\draw (-1.3,1.8) node {$\mathbf{d}$};
\draw[dashed,thick] (-0.6,1.14)--++(-1*0.707,-1*1.225);
\end{tikzpicture}
}\end{center}
\caption{Real part of the total field for scattering of a plane wave by a sound-soft nonconvex polygon of the form illustrated in Figure~\ref{BEMmodelproblem}, for two different incidence angles. In both cases a shadow boundary (shown as a dotted line) emanating from the top vertex of the polygon intersects the horizontal nonconvex side $\Gamma_{\rm nc}$: in (a) this is the shadow boundary associated with the incident wave; in (b) this is the shadow boundary associated with the reflection of the incident wave in the side $\Gamma_{\rm nc}'$.
Here the convex sides have equal length $L_{\rm c}= L_{\rm c}'= 4\pi$ and the nonconvex sides have equal length $L_{\rm nc}= L_{\rm nc}'= 2\pi$. In $(x_1,x_2)$ coordinates the vertices lie at $\bP = (-2\pi,-2\pi)$, $\bQ=(0,-2\pi)$, $\bR=(0,0)$, $\bS=((1+\sqrt{7})\pi,(1-\sqrt{7})\pi)$. The wavenumber $k=10$.
\label{fig:total1}}
\end{figure}

This suggests a natural way to deal with shadow boundaries in the context of HNA methods, namely to mirror the modification of the GTD approximation described above, and include in the HNA approximation space the appropriate (oscillatory) canonical solutions. 
This approach has already been implemented in 2D in the context of scattering by a class of nonconvex polygons in \cite{NonConvex}. However, while the apparent simplicity of this approach is appealing, its applicability is limited because suitable canonical solutions are available for only a few types of shadow boundary, for example that arising in the diffraction of a plane wave by a sound-hard or sound-soft wedge (the latter case being applied in \cite{NonConvex}). For many types of shadow boundary a convenient exact solution to the relevant canonical problem is not available, for example the diffraction of a plane wave by a penetrable (transmission) wedge (which would be relevant to the study of scattering by penetrable polygons - see e.g.\ \cite{GrHeLa:13}), or the diffraction of a plane wave by a sound-soft or sound-hard quarter plane (which would be relevant to the study of scattering by rectangular screens in 3D, see e.g.\ \cite[\S7.6]{ChGrLaSp:11}). 

In this paper we propose a new, more general methodology for the treatment of shadow boundaries in HNA methods, based on local mesh refinement. 
Our proposed methodology uses an HNA approximation space built from the classical GTD components, and does not rely on the existence of canonical solutions. Instead, at a shadow boundary we propose to cut off sharply (with a jump discontinuity) the GTD component that is being switched on/off. For the associated higher-order GTD component (whose classical GTD diffraction coefficient blows up at the shadow boundary), we propose that the mesh associated with its piecewise polynomial approximation in the HNA method should be appropriately adapted so as to (i) accurately capture the expected rapid variation in its amplitude near the shadow boundary, and (ii) compensate for the jump discontinuity artificially introduced when sharply cutting on/off the other GTD component. Specifically, we propose that the intersection $\Lambda_{\rm SB}$ of the shadow boundary with the scatterer boundary $\Gamma$ ($\Lambda_{\rm SB}$ will be a single point on $\Gamma$ in 2D and a curve on $\Gamma$ in 3D) should form part of the mesh skeleton, and that the mesh should be refined towards $\Lambda_{\rm SB}$.


To describe our methodology in more detail we shall focus on the specific 2D problem of scattering of a plane wave by a sound-soft nonconvex polygon of the type considered in \cite{NonConvex}. 
In this case the shadow boundary behaviour is governed by a canonical solution involving the Fresnel integral; the main results of this paper therefore concern regularity and approximation properties of the Fresnel integral and related functions. 
For ease of exposition, we shall restrict our attention to the simple case of a quadrilateral with a right-angled nonconvexity, as illustrated in Figure~\ref{BEMmodelproblem}. But the results presented below can also be applied, with the appropriate modifications, to all of the polygons in the class defined in \cite[Definition 3.1]{NonConvex}. 
As such, this paper represents a proof of concept that mesh refinement can be used to deal with shadow boundary behaviour in HNA methods. 
We expect that the general philosophy of our approach (as described in the previous paragraph) should apply generically (in 3D problems as well as 2D ones), but the details of the appropriate mesh design could vary considerably depending on the nature of the shadow boundaries under consideration, and we leave further investigation of this to future work.


The structure of the paper is as follows. In \S\ref{sec:polygons} we state the scattering problem to be solved, and in \S\ref{sec:BIEFormulation} recall its boundary integral equation (BIE) reformulation. In \S\ref{sec:HFBehaviour} we consider the high frequency asymptotic behaviour of the solution of the BIE, our main new result being Theorem~\ref{thm:PsiDecomp}, which provides a representation of the solution near shadow boundaries which is suitable for HNA approximation, along with the regularity results required for rigorous numerical analysis. This analysis is carried out in \S\ref{sec:HNAApproxSpace}, where we prove in Theorem \ref{thm:BestApprox} that our new approximation space, treating the shadow boundary using mesh refinement, achieves the same qualititative performance (in terms of the number of degrees of freedom required at high frequencies) as the scheme proposed in \cite{NonConvex}, without the need to evaluate any canonical diffraction solutions. In \S\ref{sec:EApprox} we prove a number of approximation properties of the Fresnel integral and related functions, which underpin the analysis in \S\ref{sec:polygons}. Given the ubiquity of the Fresnel integral in the mathematical description of shadow boundary phenomena (see, e.g., \cite{TeChKiOcSmZa:00,OckTew:12,FLP}), these results may be of some independent interest beyond the scattering problem considered in this paper. Finally, in \S\ref{sec:NumResults} we present some numerical results to validate our theoretical error estimates.

\section{Scattering by a sound-soft polygon}
\label{sec:polygons}

\begin{figure}[t]
\begin{center}
 \subfigure[The scatterer $\Omega$ and the local coordinates $(x_1,x_2)$, $(r,\theta)$ and $s$ on 
$\Gamma_{\rm nc}$.
\label{fig:Geometry}]{
	\begin{tikzpicture}[line cap=round,line join=round,>=triangle 45,x=1.0cm,y=1.0cm, scale=0.8]
	\fill[line width=0pt,color=cqcqcq,fill=cqcqcq] (7.47,-2.47)--(5,3)--  (5,0)-- (2,0)-- cycle;
		\draw [line width=1.2pt] (7.47,-2.47)--(5,3)--  (5,0)-- (2,0)-- (7.47,-2.47);
	\draw (4.6,3.2) node {$\bR$};
	\draw (5.3,-0.2) node {$\bQ$};
	\draw (2,0.3) node {$\bP$};
		\draw (7.8,-2.5) node {$\bS$};
	\draw (4.5,1.05) node {$\Gamma_{\rm nc}'$};
	\draw (3.9,0.35) node{$\Gamma_{\rm nc}$};
		\draw (6.8,0.2) node{$\Gamma_{\rm c}$};
			\draw (4.5,-1.7) node{$\Gamma_{\rm c}'$};
	\draw (5,0.38)-- (4.58,0.38);
	\draw (4.58,0.38)-- (4.58,0);
	\draw [->] (5,3) -- (5,5);
	\draw [->] (5,3) -- (7,3);
	\draw (7.1,3.3) node {$x_1$};
	\draw (5.5,4.8) node {$x_2$};
	\draw [->] (5,2.2) arc (-90:26.57:0.8);
	\draw (6,2.7) node {$\alpha$};
		\draw [dashed] (5,3) -- (8,4.5);
	\begin{scope}[xshift=-0.5cm, yshift=-0.25cm]
	\draw (9.5,4) -- (8.5,6) ;
	\draw (9.3,3.9) -- (8.3,5.9) ;
	\draw (9.1,3.8) -- (8.1,5.8) ;
	\draw [thick,->] (9,5) -- (8,4.5) ;	
	\draw (9.5,5.2) node {$u^i$};
	\draw (8.5,4.2) node {$\bd^i$};
		\end{scope}
	\draw (6,-1.1) node {$\Omega$};
	\filldraw (2.9,0) circle (3pt);
	\draw (2.8,0.3) node {$\bx$};
	\draw [<->] (3.05,0.2) -- (4.9,2.8);
	\draw (3.7,1.5) node {$r$};
	\draw [->] (5,1.6) arc (-90:237:1.4);
	\draw (4,4.4) node {$\theta$};
	\draw [<->] (3.0,-0.2) -- (4.9,-0.2);
	\draw (3.9,-0.5) node {$s$};
	\draw (7.4,1.8) node {$D$};
	\draw [thick,->] (7,-1.4) -- ++ (0.45*2.21,0.45) ;	
	\draw (7.6,-1.5) node {$\bn$};
\end{tikzpicture}
}
\hs{5}
 \subfigure[Diffraction by the wedge (shaded) formed by extending $\Gamma_{nc}'$ and $\Gamma_{c}$ to infinity, or the knife edge (thick line) formed by extending $\Gamma_{nc}'$ to infinity.
 \label{fig:KnifeEdge}]{
	\begin{tikzpicture}[line cap=round,line join=round,>=triangle 45,x=1.0cm,y=1.0cm, scale=0.8]
	\fill[line width=0pt,color=cqcqcq,fill=cqcqcq] (7.47,-2.47)--(5,3)--  (5,-2.47)-- cycle;
	\draw [line width=1.8pt] (5,3)-- (5,-2.47);
	\draw [line width=0.8pt] (5,3)-- (7.47,-2.47);
	\draw [->] (5,2.2) arc (-90:26.57:0.8);
	\draw (6,2.6) node {$\alpha$};
		\draw [dashed] (5,3) -- (8,4.5);
	\begin{scope}[xshift=-0.5cm, yshift=-0.25cm]
	\draw (9.5,4) -- (8.5,6) ;
	\draw (9.3,3.9) -- (8.3,5.9) ;
	\draw (9.1,3.8) -- (8.1,5.8) ;
	\draw [thick,->] (9,5) -- (8,4.5) ;	
	\draw (9.5,5.2) node {$u^i$};
	\draw (8.5,4.2) node {$\bd^i$};
		\end{scope}
	\draw (4.6,3.2) node {$\bR$};
	\filldraw (2.9,0) circle (3pt);
	\draw (2.8,0.3) node {$\bx$};
	\draw [<->] (3.05,0.2) -- (4.9,2.8);
	\draw (3.7,1.5) node {$r$};
	\draw [->] (5,1.6) arc (-90:237:1.4);
	\draw (4,4.4) node {$\theta$};
\end{tikzpicture}
}
\end{center}
\caption{Geometry of the scattering problem.
\label{BEMmodelproblem}}
\end{figure}
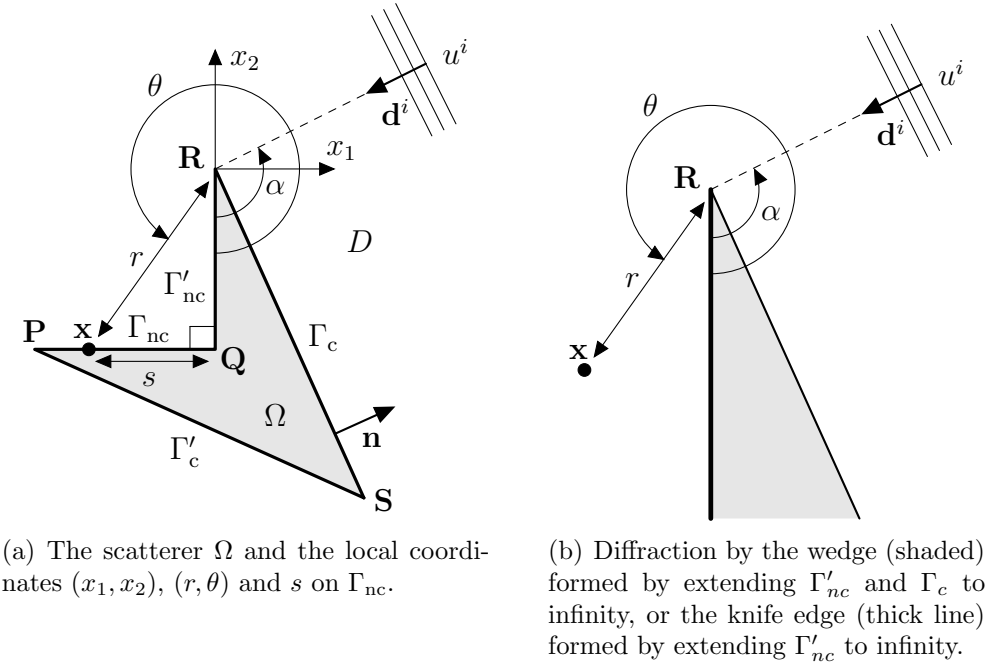

We consider the scattering of a time harmonic incident plane wave
\begin{align}
u^i(\bx)&:=\re^{\ri k \bx\cdot \bd^i},
\qquad \bx=(x_1,x_2)\in\mathbb{R}^2,
\qquad \bd^i=(d^i_1,d^i_2)\in\mathbb{R}^2,\,\,|\bd^i|=1,
\label{eqn:1}
\end{align}
by a sound-soft polygon $\Omega$ of the form illustrated in Figure~\ref{fig:Geometry}. 
We seek a total field $u$ satisfying \rf{eqn:HE} in the exterior domain $D:=\mathbb{R}^2\backslash\overline{\Omega}$ and the sound-soft Dirichlet boundary condition 
$u=0$ on $\Gamma:=\partial\Omega$, with the scattered field $u^s=u-u^i$ outgoing at infinity (i.e., satisfying the Sommerfeld radiation condition - see, e.g., \cite[(2.9)]{ChGrLaSp:11}). We assume that lengths have been non-dimensionalised with respect to a typical length scale of the scatterer (e.g.\ its diameter) so that the wavenumber $k>0$ is non-dimensional. 
Example plots of the solution $u$ for a particular choice of $\Omega$ are shown in Figure~\ref{fig:total1}.

%
%
\subsection{Boundary integral equation formulation}
\label{sec:BIEFormulation}

To solve the boundary value problem described above using a BEM one first writes the solution $u$ using Green's representation theorem (see \cite{Convex} and \cite[(2.107)]{ChGrLaSp:11}) as
\begin{align}
  \label{RepThm}
  u(\bx) = u^i(\bx) - \int_\Gamma \Phi_k(\bx,\by)\pdone{u}{\bn(\by)}(\by)\,\rd s(\by), \qquad \bx\in D,
\end{align}
where $\Phi_k(\bx,\by)=(\ri/4)H^{(1)}_0\left(k\left|\bx-\by\right|\right)$ is the fundamental solution for~(\ref{eqn:HE}), with $H_0^{(1)}(z) = J_0(z) + \ri Y_0(z)$ denoting the Hankel function of the first kind of order zero, and $\pdonetext{u}{\bn}$ is the normal derivative of $u$ on $\Gamma$, with $\bn$ the unit normal directed into $D$. 
As is reviewed in \cite[\S2]{ChGrLaSp:11}, 
from the representation formula \eqref{RepThm} one can derive various boundary integral equations (BIEs) for $\pdonetext{u}{\bn}\in L^2\left(\Gamma\right)$, each taking the form
\begin{align}
  \cA\frac{\partial u}{\partial \bn}&= l, \label{eqn:BIE}
\end{align}
where $l\in L^2\left(\Gamma\right)$ is defined in terms of the incident wave $u^i$, and $\cA:L^2\left(\Gamma\right)\rightarrow L^2\left(\Gamma\right)$ is a bounded linear integral operator. 
The particular choice of operator $\cA$ is irrelevant for the purposes of this paper. However, we note that for the scatterer $\Omega$ under consideration (in fact for all star-like Lipschitz scatterers) it is possible (for details see \cite{SCWGS11}) to choose a formulation in which $\cA$ is \emph{coercive}, satisfying an estimate of the form
\begin{align}
  \big|\left( \cA\psi,\psi\right)_{L^2\left(\Gamma\right)}\big|\geq \gamma \left\|\psi\right\|_{L^2(\Gamma)}^2, \quad \psi\in L^2\left(\Gamma\right),\,k>0,
\label{eqn:Coercivity}
\end{align}
where $\gamma>0$ is a constant independent of $k$, and $\left( \cdot,\cdot\right)_{L^2\left(\Gamma\right)}$ denotes the usual inner product in $L^2(\Gamma)$. By the standard Lax Milgram lemma the coercivity property \rf{eqn:Coercivity}, combined with the boundedness of $\cA$, implies unique solvability of the BIE \rf{eqn:BIE}. Moreover, for any closed subspace $V_N$ of $L^2(\Gamma)$ the Galerkin variational problem,
\begin{align}
\label{eqn:Galerkin}
\textrm{find } v_N\in V_N \textrm{ such that } \left(\cA v_N,w_N\right)_{L^2\left(\Gamma\right)} = \left( l,w_N\right)_{L^2\left(\Gamma\right)}, \textrm{ for all } w_N\in V_N,
\end{align}
also has a unique solution $v_N$ which satisfies the quasi-optimality estimate
\begin{align}
\label{eqn:QuasiOptimality}
\left\|\pdone{u}{\bn}-v_N\right\|_{L^2\left(\Gamma\right)} \leq \frac{\|\cA\|_{L^2\left(\Gamma\right)\to L^2\left(\Gamma\right)} }{\gamma} \inf_{w_N\in V_N} \left\|\pdone{u}{\bn}-w_N\right\|_{L^2\left(\Gamma\right)},
\end{align}
where the infimum on the right-hand side represents the best approximation error for approximating $\pdonetext{u}{\bn}$ by an element of $V_N$. 
 
In the BEM one takes $V_N$ to be a certain finite-dimensional subspace of $L^2(\Gamma)$ (of dimension $N\in\N$, say), which, after choosing a suitable basis for $V_N$, reduces the solution of \rf{eqn:Galerkin} to the inversion of a linear system, in which the matrix entries are integrals involving the basis functions and the operator $\cA$. 
The conventional choice for $V_N$ is a space of piecewise polynomials on $\Gamma$ defined on an appropriate mesh. As explained in \S\ref{sec:Intro}, this choice typically requires $N$ to grow at least in proportion to $k$ as $k\to\infty$ in order to keep the best approximation error fixed. The HNA approach aims to reduce the value of $N$ required when $k$ is large, by using an approximation space $V_N$ consisting of piecewise polynomials multiplied by certain oscillatory functions, which are chosen based on partial knowledge of the high frequency ($k\to\infty$) asymptotic behaviour of $\pdonetext{u}{\bn}$, which we now consider.

\subsection{High frequency solution behaviour}
\label{sec:HFBehaviour}
For the scatterer $\Omega$ of Figure~\ref{BEMmodelproblem}, 
we classify the sides of $\Omega$ into `convex' sides ($\Gamma_{\rm c}$ and $\Gamma_{\rm c}'$) and `nonconvex' sides ($\Gamma_{\rm nc}$ and $\Gamma_{\rm nc}'$), the nature of the high frequency behaviour being different on each type of side.  Denote the lengths of the sides $\Gamma_{\rm c}$, $\Gamma_{\rm c}'$, $\Gamma_{\rm nc}$ and $\Gamma_{\rm nc}'$ by $L_{\rm c}$, $L_{\rm c}'$, $L_{\rm nc}$ and $L_{\rm nc}'$ respectively. On each side let $s$ be the local arc length along the side, measured anti-clockwise around $\Gamma$, as illustrated for the side $\Gamma_{\rm nc}$ in Figure~\ref{BEMmodelproblem}. 

On the convex side $\Gamma_{\rm c}$ (for $\Gamma_{\rm c}'$ things are similar), we have the decomposition (\cite[Theorem 3.2]{NonConvex} and \cite[\S3]{HeLaMe:11})
\begin{align}
  \pdone{u}{\bn}(\bx(s)) = \Psi(\bx(s)) + v^+(s)\re^{\ri ks} +v^-(L_{\rm c}-s) \re^{-\ri ks}, \qquad s\in[0,L_{\rm c}],
\label{Decomp}
\end{align}
where $\Psi$ is the GO approximation (representing the contribution of the incident and specularly reflected waves), and the second and third terms in~\rf{Decomp} represent the combined contribution of all the diffracted waves emanating from the corners $\bS$ and $\bR$ respectively (including the high-order multiply-diffracted waves which have travelled arbitrarily many times around the boundary). 
Explicitly, $\Psi = 2\pdonetext{ u^i}{\bn}$ if $\Gamma_{\rm c}$ is illuminated by the incident wave (i.e.\ if $\bd^i\cdot \bn<0$ on $\Gamma_{\rm c}$) and $\Psi=0$ otherwise. The functions $v^\pm(s)$ are analytic in the right half-plane $\real{s}>0$, and for $k\geq k_0>0$ there exists a constant $C>0$ depending only on $k_0$ and on the shape of $\Omega$ (i.e.\ on its corner angles and not on its size) such that
\begin{align}
  \label{vjpmBounds}
  |v^\pm(s)|\leq \begin{cases} C k^2|ks|^{-\delta^\pm}, & 0<|s|\leq 1/k,\\
  C k^2|ks|^{-1/2}, & |s|> 1/k,
  \end{cases}
  \quad \real{s}>0,
\end{align}
where $\delta^\pm \in (0,1/2)$ depend on the corner angles at $\bS$ and $\bR$ respectively. \footnote{The $k^2$ factor on the right-hand side of \rf{vjpmBounds} can be sharpened to $k^{3/2}\log^{1/2}(2+k)$, and possibly even to $k$ (see \cite[Remark 3.3]{NonConvex}), but we are deliberately keeping things simple here. Similar statements apply to the bounds \rf{tildevEst}, \rf{eqn:BestApprox} and \rf{eqn:BestApprox2} below.} 
The bounds \rf{vjpmBounds} imply that the functions $v^\pm(s)$ are non-oscillatory (i.e.\ slowly varying), since by the Cauchy integral formula one can show that their derivatives grow no faster that the functions themselves with respect to increasing $k$ (cf.\ \cite[Remark 4.2]{ScreenBEM}). They are hence much easier to approximate than $\pdonetext{u}{\bn}$ itself when $k$ is large; this is exploited in the design of the HNA approximation space, which we discuss in the next section.

On the nonconvex side $\Gamma_{\rm nc}$ (for $\Gamma_{\rm nc}'$ things are similar) the asymptotics are more complicated. With regard to the GO components, as well as the incident wave (and its reflection in $\Gamma_{\rm nc}$) we now also expect (depending on the incident angle) a reflected wave 
\begin{align}
u^r(\bx)&=\re^{\ri k \bx\cdot \bd^r},
\qquad \bd^r=(-d^i_1,d^i_2),
\label{eqn:urDef}
\end{align}
generated by the other nonconvex side $\Gamma_{\rm nc}'$ (and also its subsequent reflection in $\Gamma_{\rm nc}$). With regard to diffracted waves, as before we expect waves oscillating like $\re^{\pm \ri ks}$, but we also expect waves oscillating like $\re^{\ri k r(s)}$ (with $r(s)=\sqrt{s^2+L_{\rm nc}'^2}$ as in Figure~\ref{BEMmodelproblem}), corresponding to diffraction from the corner $\bR$. 
Crucially, we now have the possibility that the incident wave may illuminate only part of the side, rather than all or none of the side, as was the case for $\Gamma_{\rm c}$. Precisely, this partial illumination occurs when $\alpha^i<  \alpha<\pi$, where $\alpha^i = \pi-\tan^{-1}(L_{\rm nc}/L_{\rm nc}')$, in which case the shadow boundary $\theta=\alpha+\pi$ intersects $\Gamma_{\rm nc}$ at $\xSB^i = (-s_{\rm SB}^i,-L_{\rm nc}')$, where $s_{\rm SB}^i=L_{\rm nc}'\tan(\pi-\alpha)$. The portion of $\Gamma_{\rm nc}$ to the left of $\xSB^i$ is illuminated, and the portion to the right of $\xSB^i$ is not (cf.\ Figure~\ref{fig:IncShad}). 
Similarly, the reflected wave generated by $\Gamma_{\rm nc}'$ may illuminate only a part of $\Gamma_{\rm nc}$. This kind of partial illumination occurs when $\pi<  \alpha<\alpha^r$, where $\alpha^r=\pi+\tan^{-1}(L_{\rm nc}/L_{\rm nc}')$, in which case the shadow boundary $\theta=3\pi-\alpha$ intersects $\Gamma_{\rm nc}$ at $\xSB^r = (-s_{\rm SB}^r,-L_{\rm nc}')$, where $s_{\rm SB}^r=L_{\rm nc}'\tan(\alpha-\pi)$. The portion of $\Gamma_{\rm nc}$ to the right of $\xSB^r$ is illuminated, and the portion to the left of $\xSB^r$ is not (cf.\ Figure~\ref{fig:RefShad}). 

As can be seen in the plots of Figure~\ref{fig:total1}, across these shadow boundaries the field does not undergo the sharp jump discontinuity predicted by the GO approximation; rather the diffracted field associated with the corner $\bR$ ensures that the transition from `light' to `dark' occurs smoothly but rapidly. As is well known, this transition behaviour is governed by a canonical solution of the Helmholtz equation involving the Fresnel integral. 
Specifically, one can prove \cite[Theorem 3.6, Lemma 3.5]{NonConvex} that on $\Gamma_{\rm nc}$ 
\begin{align}
\label{dudnGamma2Rep}
\pdone{u}{\bn}(\bx(s))&=\Psi(\bx(s)) + v^+(L_{\rm nc}+s)\re^{\ri ks} +v^-(L_{\rm nc}-s) \re^{-\ri ks}+v(s)\re^{\ri kr(s)}, \qquad s\in [0,L_{\rm nc}],
\end{align}
where
\begin{enumerate}[(i)]
\item
$\Psi=2\pdonetext{u^d}{\bn}$ if $\tfrac{\pi}{2}<\alpha<\tfrac{3\pi}{2},$ and $\Psi=0$ otherwise; 
here 
\begin{align}
\label{eqn:udDef}
u^d(r,\theta,\alpha) =E(r,\theta-\alpha) - E(r,\theta+\alpha),
\end{align}
where 
$E(r,\psi) = \re^{-\ri kr\cos{\psi}}\,{\rm Fr}(-\sqrt{2kr}\cos(\psi/2))$, 
and ${\rm Fr}$ is a Fresnel integral, 
\begin{align}
\label{eqn:FresnelDef}
{\rm Fr}(z)  =\frac{1}{2}\erfc(\re^{-\ri \pi/4}z)= \frac{\re^{-\ri\pi/4}}{\sqrt{\pi}} \int_z^\infty \re^{\ri t^2} \,\rd t ,
\end{align}
the integral being understood in the improper sense;
\item the functions $v^\pm(s)$ are analytic in $\real{s}>0$, and for $k\geq k_0>0$ satisfy \rf{vjpmBounds} with $\delta^+=\delta^- \in(0,1/2)$ depending only on the corner angle at $\bP$ and $C>0$ depending only on $k_0$ and the shape of $\Omega$;
\item the function $v(s)$ is analytic in a complex neighbourhood $U_{\rm nc}$ of $[0,L_{\rm nc}]$ (with $U_{\rm nc}$ being independent of $k$, and depending only on the shape of $\Omega$),
with
\begin{align}
\label{tildevEst}
\left|v(s)\right|&\leq Ck^2, \qquad s\in U_{\rm nc},\, k\geq k_0>0,
\end{align}
where $C>0$ depends only on $k_0$ and the shape of $\Omega$.
\end{enumerate}

\begin{rem}
The function $u^d$ is (cf.\ \cite[\S8.2]{BoSeUs:69}) the exact solution to the canonical problem of diffraction of the plane wave $u^i$ by the infinite knife edge $\left\{(r,0)\,:\,r\in[0,\infty)\right\}$ extending the side $\Gamma_{\rm nc}'$ (cf.\ Figure~\ref{fig:KnifeEdge}) on which homogeneous Dirichlet (sound-soft) boundary conditions are imposed. 
The use of this simple canonical solution, instead of the more obvious (yet complicated) choice of the solution to diffraction of $u^i$ by the infinite wedge extending $\Gamma_{\rm nc}'$ and $\Gamma_{\rm c}$ (cf.\ Figure~\ref{fig:KnifeEdge}) is justified because the two solutions have the same shadow boundary behaviour on $\Gamma_{\rm nc}$: the difference between the two solutions is a circular wave whose amplitude varies slowly across the relevant shadow boundaries - see \cite{Oberhettinger} and the discussion in \cite[\S3.2]{NonConvex}. 
\end{rem}

The first term $\Psi$ in \rf{dudnGamma2Rep} represents a modified geometrical optics approximation; depending on the value of $\alpha$, this includes contributions from the incident wave (via $E(r,\theta-\alpha)$ in \rf{eqn:udDef}), and the reflection of the incident wave in $\Gamma_{\rm nc}'$ (via $E(r,\theta+\alpha)$), these waves being switched on/off smoothly across their respective shadow boundaries by the function $E$. (A number of basic properties of $E(r,\psi)$ are collected in \S\ref{sec:EApprox} below.)

As on a convex side, the power of the decomposition \rf{dudnGamma2Rep} is that the amplitudes $v^\pm(s)$ and $v(s)$ of the remaining terms are non-oscillatory. This fact is exploited in the design of the HNA approximation space in \cite{NonConvex}, as will be reviewed shortly. However, the BEM proposed in \cite{NonConvex} requires the canonical solution $u^d$ to be evaluated analytically. The purpose of this paper is to show that such analytical evaluations are not necessary - the shadow boundary behaviour can be treated by mesh refinement. The design of the new HNA approximation space we propose in \S\ref{sec:HNAApproxSpace}, and the associated best approximation error estimates, are based on the following theorem, which follows from 
the results of \S\ref{sec:EApprox}.
\begin{thm}
\label{thm:PsiDecomp}
For $\tfrac{\pi}{2}< \alpha <\tfrac{3\pi}{2}$ we can decompose
\begin{align}
\label{eqn:PsiDecomp}
\Psi(\bx(s)) =\Psi_{\rm GO}(\bx(s)) + V(s)\re^{\ri k r(s)}, \qquad s\in[0,L_{\rm nc}],
\end{align}
and hence, with $\tilde v(s) = v(s) + V(s)$,
\begin{align}
\label{dudnGamma2Rep2}
\pdone{u}{\bn}(\bx(s))&=\Psi_{\rm GO}(\bx(s)) + v^+(L_{\rm nc}+s)\re^{\ri ks} +v^-(L_{\rm nc}-s) \re^{-\ri ks}+\tilde v(s)\re^{\ri kr(s)}, \quad s\in [0,L_{\rm nc}],
\end{align}
where 
$\Psi_{\rm GO}$ is the classical GO approximation,
\begin{align*}
\label{}
\Psi_{\rm GO}(\bx(s)) = H(s-s_{\rm SB}^i)\pdone{u^i}{\bn}(\bx(s)) - H(s_{\rm SB}^r-s)\pdone{u^r}{\bn}(\bx(s)),
\end{align*}
with $H$ denoting the Heaviside function ($H(x) = 0$ for $x<0$, $H(0)=1/2$, and $H(x)=1$, for $x>0$) and $u^r$ defined as in \rf{eqn:urDef}, and 
\begin{align}
\label{eqn:VDef}
V(s) &= -H(s-s_{\rm SB})g^+(s-s_{\rm SB}) + H(s_{\rm SB}-s)g^-(s_{\rm SB}-s) - g^-(s+s_{\rm SB}),
\end{align}
where $s_{\rm SB}=L_{\rm nc}'|\tan(\pi-\alpha)|\geq 0$ and, with the function $g(s;R,\beta)$ defined as in \rf{eqn:gDef}, 
\begin{align*}
\label{}
g^\pm(s) = g(s;L_{\rm nc}'/\cos(\pi-\alpha),\tfrac{\pi}{2}\pm|\pi-\alpha|).
\end{align*}

For any $0<\delta<1$, $g^\pm(t)$ are analytic and bounded in
\begin{align*}
\cS^\delta=\left\lbrace s\in\C:\, \left|\im{s}\right|<(1-\delta)L_{\rm nc}' \textrm{ and } \left|\arg{s}\right|< \arctan\sqrt{(11+5\sqrt{5})/2}\right\rbrace;
\end{align*}
specifically, for any $k_0>0$ there exists $C>0$, depending only on $\delta$ and $k_0L_{\rm nc}'$, such that 
\begin{align}
\label{}
|g^\pm(s))| \leq Ck, \qquad s\in\cS^\delta, \; k\geq k_0>0.
\end{align}
\end{thm}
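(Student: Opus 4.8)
The plan is to reduce \rf{eqn:PsiDecomp} to a single model computation for $E(r,\psi)$ and then assemble the two copies appearing in $u^d=E(r,\theta-\alpha)-E(r,\theta+\alpha)$. Starting from $\Psi=2\,\pdonet{u^d}{\bn}$, I would write each $E$ in the trivial algebraic form
\begin{align*}
E(r,\psi)&=H\!\left(\cos(\psi/2)\right)\re^{-\ri kr\cos\psi}+\re^{\ri kr}D(r,\psi),\\
D(r,\psi)&:=\re^{-\ri kr(1+\cos\psi)}\left[{\rm Fr}\!\left(-\sqrt{2kr}\cos(\psi/2)\right)-H\!\left(\cos(\psi/2)\right)\right],
\end{align*}
in which the first term is precisely the incident (resp.\ reflected) plane wave switched on in its lit sector $|\psi|<\pi$, while the second is the diffracted field, oscillating like $\re^{\ri kr}$ with amplitude $D$. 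The split itself is immediate (add and subtract $H\cdot$(plane wave)); its substance, which I would import wholesale from \S\ref{sec:EApprox}, is that $D$—and hence the amplitudes produced after differentiation—extends analytically off $\Gamma_{\rm nc}$ and is captured by the function $g(\cdot;R,\beta)$ of \rf{eqn:gDef}, with the stated bounds.

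Next I would compute $\pdonet{}{\bn}$ along $\Gamma_{\rm nc}$ using $r(s)=\sqrt{s^2+L_{\rm nc}'^2}$ and $\theta(s)=\arctan(s/L_{\rm nc}')$, together with the fact that $\bn$ is constant on the straight side, so that $\pdonet{r}{\bn}$ and $\pdonet{\theta}{\bn}$ are explicit $k$-independent functions of $s$ of size $O(1/r(s))$. Applying $2\pdonet{}{\bn}$ to the plane-wave terms, using that the two lit sectors $|\theta\mp\alpha|<\pi$ translate via elementary trigonometry into $s>s_{\rm SB}^i$ and $s<s_{\rm SB}^r$ respectively, and matching $\re^{-\ri kr\cos(\theta\mp\alpha)}$ with the traces of $u^i$ and $u^r$ (the incident direction sitting at angle $\alpha+\pi$ and its reflection in the knife edge $\theta=0$ at $\pi-\alpha$), yields exactly $\Psi_{\rm GO}$. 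The delta-type contribution from differentiating $H$ across the shadow ray does not enter the pointwise decomposition away from $s=s_{\rm SB}$, which is all \rf{eqn:PsiDecomp} asserts.

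For the diffracted terms, $2\pdonet{}{\bn}\big(\re^{\ri kr}D\big)=\re^{\ri kr(s)}\big(2\ri k\,\pdonet{r}{\bn}D+2\pdonet{D}{\bn}\big)$, so the two copies of $E$ together contribute a single $\re^{\ri kr(s)}$ times a new slowly varying amplitude, which I would identify with $V(s)$ of \rf{eqn:VDef}. The shadow boundary that actually meets $\Gamma_{\rm nc}$ sits at $s=s_{\rm SB}=L_{\rm nc}'|\tan(\pi-\alpha)|$, whose corner-distance is $r(s_{\rm SB})=L_{\rm nc}'/\cos(\pi-\alpha)=R$; its crossing produces the two Heaviside-separated pieces $-H(s-s_{\rm SB})g^+(s-s_{\rm SB})$ and $+H(s_{\rm SB}-s)g^-(s_{\rm SB}-s)$ (lit- and shadow-side amplitudes, corresponding to the two angles $\beta=\tfrac\pi2\pm|\pi-\alpha|$), the shift by $s_{\rm SB}$ placing the origin of $g$'s argument on the shadow boundary. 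The other (image) shadow boundary lies at $s=-s_{\rm SB}\notin[0,L_{\rm nc}]$, so it requires no Heaviside and supplies the single term $-g^-(s+s_{\rm SB})$. The overall signs follow from the $E(\cdot,\theta-\alpha)-E(\cdot,\theta+\alpha)$ structure and must be tracked through the case split $\tfrac\pi2<\alpha<\pi$ versus $\pi<\alpha<\tfrac{3\pi}{2}$, which the absolute values $|\pi-\alpha|$ serve to unify.

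Finally, analyticity and the bound $|g^\pm(s)|\le Ck$ on $\cS^\delta$ I would take directly from the Fresnel-integral analysis of \S\ref{sec:EApprox}: the strip $|\im{s}|<(1-\delta)L_{\rm nc}'$ reflects the branch points of the complexified distance at $s=\pm\ri L_{\rm nc}'$, the sector $|\arg s|<\arctan\sqrt{(11+5\sqrt5)/2}$ is the range in which the steepest-descent representation of the Fresnel remainder stays valid, and the single power of $k$ comes from the factor $k\,\pdonet{r}{\bn}$ combined with the fact that $D$ is $O(1)$ at its worst (at its jump across the shadow boundary). I expect the main obstacle to be not any single estimate but the combined bookkeeping of the previous paragraph: getting every sign, Heaviside argument and angle correct so that the artificially introduced jump of $\Psi_{\rm GO}$ at $s_{\rm SB}$ is exactly cancelled by the jump of $V$—a useful internal consistency check, since $\pdonet{u^d}{\bn}$ is continuous there—while the genuinely delicate part, the sharp analyticity sector, is deferred to \S\ref{sec:EApprox}.
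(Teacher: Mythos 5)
Your proposal is correct and follows essentially the same route as the paper: your splitting of $E$ into $H(\cos(\psi/2))\re^{-\ri kr\cos\psi}+\re^{\ri kr}D$ is exactly the content of Lemmas~\ref{lem:EDecomp}--\ref{lem:dEdnDecomp} (your $D$ equals $-\sign(\pi-\psi)F(\mu)$ by \rf{eqn:FrSymmetry}), and the assembly of the two copies of $E$ in $u^d$ with the identity $s_{\rm SB}^i=-s_{\rm SB}^r$, plus the appeal to Theorem~\ref{cor:gReg} for analyticity and the $\ord{k}$ bound, is precisely the paper's argument.
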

\begin{proof}
Lemma~\ref{lem:dEdnDecomp} and Theorem~\ref{cor:gReg} together give \rf{eqn:PsiDecomp} with
\begin{align*}
\label{}
V(s) &= -H(s-s_{\rm SB}^i)g(s-s_{\rm SB}^i;R_\alpha,\tfrac{3\pi}{2}-\alpha) 
+ H(s_{\rm SB}^i-s)g(s_{\rm SB}^i-s;R_\alpha,\alpha-\tfrac{\pi}{2})\notag \\
& \qquad - H(s-s_{\rm SB}^r)g(s-s_{\rm SB}^r;R_\alpha,\alpha-\tfrac{\pi}{2}) 
+ H(s_{\rm SB}^r-s)g(s_{\rm SB}^r-s;R_\alpha,\tfrac{3\pi}{2}-\alpha),
\end{align*}
where $R_\alpha=L_{\rm nc}'/\cos{(\pi-\alpha)}$. Then \rf{eqn:VDef} follows from the fact that $s_{\rm SB}^i = -s_{\rm SB}^r$. The decomposition \rf{dudnGamma2Rep2} follows from combining \rf{eqn:PsiDecomp} with \rf{dudnGamma2Rep}.
\end{proof}

\subsection{HNA approximation space} 
\label{sec:HNAApproxSpace}
In this section we outline two different HNA approximation spaces for $\pdonetext{u}{\bn}$ on $\Gamma$, both based on the decompositions stated in the previous section, combined with an $hp$ approximation strategy.  We first review the approach of \cite{NonConvex}, which is based on the decompositions \rf{Decomp} and \rf{dudnGamma2Rep}, with the canonical solution $u^d$ appearing in \rf{dudnGamma2Rep} being evaluated analytically. We then explain our new approach, which uses \rf{dudnGamma2Rep2} in place of \rf{dudnGamma2Rep} (so that no evaluations of $u^d$ are required), the shadow boundary effects being dealt with by mesh refinement.

The graded meshes and the associated spaces of piecewise polynomials we consider below are all constructed from the same basic geometric mesh $\mathcal{G}_n(0,L)$, $L>0$, $n\in\N$, defined in Appendix~\ref{app:meshes}, which is then scaled, reflected and translated as required. 
For simplicity (and for consistency with \cite{NonConvex}) we shall assume the same number of layers $n$ in every geometric mesh in our approximation spaces, and the same maximum polynomial degree $p\in\N_0$ on every mesh element\footnote{For reasons of efficiency and conditioning it is preferable to reduce the maximum polynomial degree near points of mesh refinement, as in, e.g., \cite{ScreenBEM}, but we do not consider this here.}. We also assume throughout - this is our $hp$ assumption - that
\begin{align}
\label{eqn:cDef}
n \geq \max\{1,cp\}, \qquad \textrm{for some fixed constant } c>0.
\end{align}

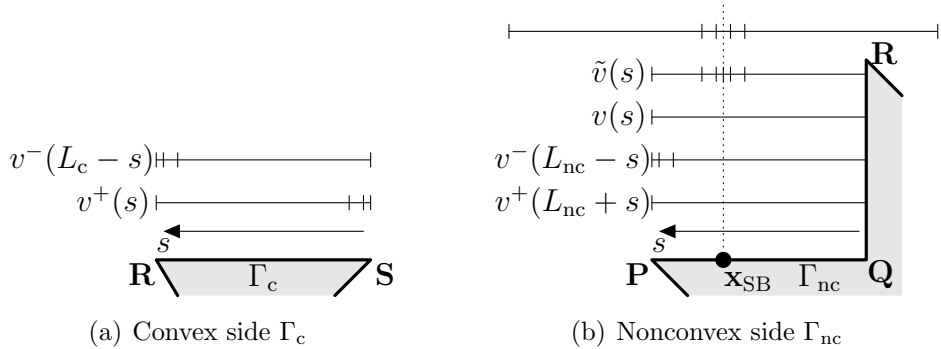
\begin{figure}[t]
  \begin{center}
\subfigure[Convex side $\Gamma_{\rm c}$ \label{fig:MeshConvex}]{
  	\begin{tikzpicture}[line cap=round,line join=round,>=triangle 45,x=1.0cm,y=1.0cm, scale=0.95]
\begin{scope}[xscale=-1,yscale=1,xshift=-3cm,yshift=-0.4cm]
	\fill[line width=0pt,color=cqcqcq,fill=cqcqcq] (0.5,-0.5)--(0,0)--(3,-0) -- (2.7,-0.5)  -- cycle;
	\draw [line width=1.2pt] (0.5,-0.5)--(0,0)--(3,-0) -- (2.7,-0.5);
\end{scope}
	\draw (0,0.4)--(3,0.4);
	\draw (0,0.3)--(0,0.5);
	\draw (2.7,0.3)--(2.7,0.5);
	\draw (2.9,0.3)--(2.9,0.5);
	\draw (3,0.3)--(3,0.5);
	\draw (-0.6,0.4) node {$v^+(s)$};
\begin{scope}[xscale=-1,yscale=1,xshift=-3cm,yshift=0.6cm]
	\draw (0,0.4)--(3,0.4);
	\draw (0,0.3)--(0,0.5);
	\draw (2.7,0.3)--(2.7,0.5);
	\draw (2.9,0.3)--(2.9,0.5);
	\draw (3,0.3)--(3,0.5);
\end{scope}
	\draw (-1.04,1) node {$v^-(L_{\rm c}-s)$};
	\draw [<-] (0.1,0) -- (2.9,0);
	\draw (0.1,-0.2) node {$s$};
	\draw (-0.2,-0.6) node {$\bR$};
	\draw (3.2,-0.6) node {$\bS$};
	\draw (1.5,-0.65) node {$\Gamma_{\rm c}$};
\end{tikzpicture}
}
\hs{5}
\subfigure[Nonconvex side $\Gamma_{\rm nc}$ \label{fig:MeshNonconvex}]{
	\begin{tikzpicture}[line cap=round,line join=round,>=triangle 45,x=1.0cm,y=1.0cm, scale=0.95]
	\begin{scope}[xscale=1,yscale=1,yshift=-0.4cm]
		\fill[line width=0pt,color=cqcqcq,fill=cqcqcq]  (5.5,2.3)--(5,2.8)--  (5,0)-- (2,0)-- (2.5,-0.5) --(5.5,-0.5)  -- cycle;
	\draw [line width=1.2pt] (5.5,2.3)--(5,2.8)--  (5,0)-- (2,0)-- (2.5,-0.5);
	\def\sSB{3}
	\filldraw (\sSB,0) circle (3pt);
	\draw (\sSB+0.35,-0.3) node {$\xSB$};
	\draw[dotted] (\sSB,0)--(\sSB,3.6);
	\end{scope}
	\draw [<-] (2.1,0) -- (4.9,0);
	\draw (2.1,-0.2) node {$s$};
\begin{scope}[xscale=-1,yscale=1,xshift=-5cm,yshift=0.6cm]
	\draw (0,0.4)--(3,0.4);
	\draw (0,0.3)--(0,0.5);
	\draw (2.7,0.3)--(2.7,0.5);
	\draw (2.9,0.3)--(2.9,0.5);
	\draw (3,0.3)--(3,0.5);
\end{scope}
	\draw (0.9,0.4) node {$v^+(L_{\rm nc}+s)$};
\begin{scope}[xscale=-1,yscale=1,xshift=-5cm,yshift=0cm]
	\draw (0,0.4)--(3,0.4);
	\draw (0,0.3)--(0,0.5);
	\draw (3,0.3)--(3,0.5);
\end{scope}
	\draw (0.9,1) node {$v^-(L_{\rm nc}-s)$};
\begin{scope}[xscale=-1,yscale=1,xshift=-5cm,yshift=1.2cm]
	\draw (0,0.4)--(3,0.4);
	\draw (0,0.3)--(0,0.5);
	\draw (3,0.3)--(3,0.5);
\end{scope}
\begin{scope}[xshift=2cm,yshift=1.8cm]
	\draw (0,0.4)--(3,0.4);
	\draw (0,0.3)--(0,0.5);
	\draw (1,0.3)--(1,0.5);
	\draw (1.1,0.3)--(1.1,0.5);
	\draw (1.3,0.3)--(1.3,0.5);
	\draw (0.9,0.3)--(0.9,0.5);
	\draw (0.7,0.3)--(0.7,0.5);
\end{scope}
\begin{scope}[xshift=2cm,yshift=2.4cm]
	\draw (-2,0.4)--(4,0.4);
	\draw (-2,0.3)--(-2,0.5);
	\draw (4,0.3)--(4,0.5);
	\draw (1.1,0.3)--(1.1,0.5);
	\draw (1.3,0.3)--(1.3,0.5);
	\draw (0.9,0.3)--(0.9,0.5);
	\draw (0.7,0.3)--(0.7,0.5);
	\draw (-0.2,-3) node {$\bP$};
	\draw (3.2,-3) node {$\bQ$};
	\draw (3.25,0.1) node {$\bR$};
	\draw (2.35,-3.05) node {$\Gamma_{\rm nc}$};
\end{scope}
	\draw (1.54,1.6) node {$v(s)$};
		\draw (1.54,2.2) node {$\tilde{v}(s)$};
\end{tikzpicture}
}
\end{center}
  \caption{Illustration of the overlapping meshes. For the approximation on $\Gamma_{\rm nc}$, in \cite{NonConvex} the decomposition \rf{dudnGamma2Rep} is used in conjunction with the lower three meshes in (b), the shadow boundary behaviour being computed analytically through $u^d$. In this paper we use \rf{dudnGamma2Rep2} instead of \rf{dudnGamma2Rep}, replacing the mesh for $v(s)$ (comprising a single element on the whole side) by the mesh for $\tilde v(s)$, which is graded towards the point $\xSB$ where the shadow boundary intersects $\Gamma_{\rm nc}$.
\label{fig:meshes}
}
\end{figure}

The approximation space $V_N\subset L^2(\Gamma)$ in \cite{NonConvex} is defined as follows. On the convex side $\Gamma_{\rm c}$ ($\Gamma_{\rm c}'$ is similar) it uses \rf{Decomp}, with $\Psi$ evaluated analytically and $v^+(s)$ and $v^-(L_{\rm c}-s)$ approximated by piecewise polynomials of maximum polynomial degree $p\in \N_0$ on overlapping geometrically graded meshes refined towards the corner singularities at $\bS$ ($s=0$) and $\bR$ ($s=L_{\rm c}$) respectively (see Figure~\ref{fig:MeshConvex}). 
On $\Gamma_{\rm nc}$ ($\Gamma_{\rm nc}'$ is similar) it uses \rf{dudnGamma2Rep} with $\Psi$ evaluated analytically (e.g.\ using the algorithm of \cite{AlChLP:13}) and $v^+(L_{\rm nc} +s)$, $v^-(L_{\rm nc}-s)$ and $v(s)$ approximated by piecewise polynomials of degree $\leq p$ on overlapping meshes. Specifically, $v^-(L_{\rm nc}-s)$ requires a mesh graded towards the singularity at $\bP$ ($s=L_{\rm nc}$), but $v^+(L_{\rm nc}+s)$ and $v(s)$ are regular enough\footnote{The lack of a singularity at $\bQ$ ($s=0$) is due to the fact that the exterior angle at $\bQ$ equals $\pi$ divided by an integer; polygons with more general `nonconvex' angles would require mesh refinement towards $\bQ$ too - see the discussion in \cite[\S8]{NonConvex}).} to each require only a single polynomial of degree $\leq p$ supported on the whole side (see Figure~\ref{fig:MeshNonconvex}). 
In \cite[Theorem 5.6]{NonConvex} the following best approximation error estimate is proved for 
$V_N$.
\begin{align}
\label{eqn:BestApprox}
\inf_{w_N\in V_N} \left\|\pdone{u}{\bn}-w_N\right\|_{L^2\left(\Gamma\right)} \leq Ck^2 \re^{-p\tau}, \qquad k\geq k_0>0,
\end{align}
where $C>0$ depends only on $k_0$, $\sigma$ (the grading parameter in the definition of $\mathcal{G}_n(0,L)$) and the shape of $\Omega$, and $\tau>0$ depends only on $c$ (the constant in \rf{eqn:cDef}), $\sigma$ and the shape of $\Omega$. 
The best approximation error therefore decays exponentially as the maximum polynomial degree $p$ increases, as one would expect from an $hp$ method.  
Moreover, since the total number of degrees of freedom $N$ is approximately proportional to $p^2$ (precisely, $N = (6n+4)(p+1)$ for the particular $\Omega$ under consideration), \rf{eqn:BestApprox} implies that increasing $N$ in proportion to $\log^2{k}$ as $k\to\infty$ is sufficient to ensure that the best approximation error stays bounded (cf.~the discussion in \cite[Remark 6.5]{HeLaMe:11}). This represents a significant saving over conventional BEMs in which the full solution $\pdonetext{u}{\bn}$ is approximated using piecewise polynomials, which generally require $N$ to grow at least linearly with respect to increasing $k$.



We now show that similar performance can be achieved without the need to evaluate the canonical solution $u^d$ in $\Psi$ analytically. 
The modified approximation space $\widetilde V_N\subset L^2(\Gamma)$ we propose is defined as follows. On the convex sides $\Gamma_{\rm c}$ and $\Gamma_{\rm c}'$ we use the same approximation based on \rf{Decomp} as in $V_N$. On the nonconvex side $\Gamma_{\rm nc}$ ($\Gamma_{\rm nc}'$ is similar) we use the decomposition \rf{dudnGamma2Rep2} instead of \rf{dudnGamma2Rep}. The first term $\Psi_{\rm GO}$ in \rf{dudnGamma2Rep2}, representing the classical GO approximation, with the incident and reflected waves $u^i$ and $u^r$ cut off sharply across their respective shadow boundaries, is evaluated analytically; this requires only the evaluation of plane waves and no other special functions. The second and third terms $v^+(L_{\rm nc} +s)$ and $v^-(L_{\rm nc}-s)$ are approximated precisely as in $V_N$. The final term $\tilde v(s)$ is approximated on a mesh $\mathcal{M}_{\rm nc}$ graded towards the shadow boundary.\footnote{We remark that the functions $g^\pm(s)$ are not singular at $s=0$, but they vary rapidly near $s=0$, as is reflected by the fact that they are not bounded with a $k$-independent bound on any $k$-independent neighbourhood of $s=0$ (see Remark~\ref{OriginRem} below). We approximate $\tilde v(s)$ on a graded mesh in order to capture this rapid variation.}  Specifically, we propose the choice
\begin{align*}
\label{}
\mathcal{M}_{\rm nc} = \big\lbrace\big\lbrace\{s_{\rm SB} + \mathcal{G}_n(0,L_{\rm nc})\}\cup \{s_{\rm SB} - \mathcal{G}_n(0,L_{\rm nc})\}\big\rbrace \cap (0,L_{\rm nc})\big\rbrace \cup \{0,L_{\rm nc} \}.
\end{align*}

Let $\mathcal{P}^p_{{\rm nc}}$ denote the space of piecewise polynomials of degree $\leq p$ on the mesh $\mathcal{M}_{\rm nc}$. 
Standard $hp$ approximation arguments (similar to those in, e.g., \cite{HeLaMe:11,ConvexPreprint,NonConvex}) based on Lemma~\ref{EllipseLem} and the regularity results in Theorem~\ref{thm:PsiDecomp}, show that 
\begin{align}
\label{eqn:BestApproxtildev}
\inf_{q\in \mathcal{P}^p_{\rm nc}} \left\|V - q\right\|_{L^2\left(\Gamma_{\rm nc}\right)} \leq Ck \re^{-p\tau}, \qquad k\geq k_0>0,
\end{align}
where $C>0$ depends only on $k_0$, $\sigma$ and the shape of $\Omega$, and $\tau>0$ depends only on $c$, $\sigma$ and the shape of $\Omega$. Numerical experiments validating \rf{eqn:BestApproxtildev} are described in \S\ref{sec:NumResults}.

Combining \rf{eqn:BestApproxtildev} (and an analogous result for $\Gamma_{nc}'$) with \rf{eqn:BestApprox} gives the following theorem, which holds not just for the particular polygon $\Omega$ shown in Figure~\ref{fig:Geometry} but also, after appropriate obvious modifications to $\widetilde V_N$, for all of the polygons in the class defined in \cite[Definition 3.1]{NonConvex}. 
\begin{thm}
\label{thm:BestApprox}
The best approximation error for approximating $\pdonetext{u}{\bn}$ in $\widetilde V_N$ satisfies
\begin{align}
\label{eqn:BestApprox2}
\inf_{w_N\in \widetilde V_N} \left\|\pdone{u}{\bn}-w_N\right\|_{L^2\left(\Gamma\right)} \leq Ck^2 \re^{-p\tau}, \qquad k\geq k_0>0,
\end{align}
where $C>0$ depends only on $k_0$, $\sigma$ and the shape of $\Omega$, and $\tau>0$ depends only on $c$, $\sigma$, and the shape of $\Omega$. 
\end{thm}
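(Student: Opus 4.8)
The plan is to construct an explicit element $w_N \in \widetilde V_N$ by approximating, term by term, the components of the decomposition \rf{Decomp} on the convex sides and \rf{dudnGamma2Rep2} on the nonconvex sides, and then to control the total error by summing the squared $L^2$ contributions of the four sides. Since the oscillatory factors $\re^{\pm\ri ks}$ and $\re^{\ri kr(s)}$ all have modulus one on the (real) boundary $\Gamma$, the $L^2(\Gamma_{\rm nc})$ error contributed by each term equals the $L^2$ error in approximating its non-oscillatory amplitude; the triangle inequality thus reduces the whole estimate to a collection of amplitude-by-amplitude best approximation bounds.

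On the convex sides $\Gamma_{\rm c}$ and $\Gamma_{\rm c}'$ the space $\widetilde V_N$ coincides with $V_N$ (both evaluate $\Psi$ analytically and approximate $v^\pm$ on the same graded meshes), so their contributions are bounded directly by the corresponding terms in \rf{eqn:BestApprox}, giving $Ck^2\re^{-p\tau}$. On each nonconvex side the amplitudes $v^+(L_{\rm nc}+s)$ and $v^-(L_{\rm nc}-s)$ are approximated exactly as in $V_N$, so their errors are again controlled by the bounds underlying \rf{eqn:BestApprox}. The only genuinely new ingredient is the final term $\tilde v(s)\re^{\ri kr(s)}$: in $\widetilde V_N$ the analytically evaluated $\Psi_{\rm GO}$ absorbs only the classical cut-off waves, leaving $\tilde v = v + V$ to be approximated by piecewise polynomials on the refined mesh $\mathcal{M}_{\rm nc}$.

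To estimate $\inf_q\|\tilde v - q\|_{L^2(\Gamma_{\rm nc})}$ I would split $\tilde v = v + V$ and bound the two pieces separately. The rapidly-varying part $V$ is controlled by \rf{eqn:BestApproxtildev}, which gives error $\leq Ck\re^{-p\tau}$. For the smooth part $v$, which by item (iii) of the decomposition is analytic in the $k$-independent neighbourhood $U_{\rm nc}$ and bounded there by $Ck^2$ (see \rf{tildevEst}), a standard $hp$ argument via Lemma~\ref{EllipseLem} gives exponential convergence; moreover, since $\mathcal{M}_{\rm nc}$ refines the single-element mesh used for $v$ in $V_N$, the error is no worse than the corresponding $V_N$ error, namely $\leq Ck^2\re^{-p\tau}$. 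The triangle inequality then yields $\inf_q\|\tilde v - q\|_{L^2(\Gamma_{\rm nc})} \leq Ck^2\re^{-p\tau}$, and the analogous bound holds on $\Gamma_{\rm nc}'$.

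Assembling the pieces and using that $k\re^{-p\tau}$ is dominated by $k^2\re^{-p\tau}$ gives \rf{eqn:BestApprox2} with a possibly enlarged constant $C$; the same argument, with the obvious modifications, applies to every polygon in the class of \cite[Definition 3.1]{NonConvex}. I do not expect a serious obstacle, as the substantive analysis is already packaged in \rf{eqn:BestApprox} and \rf{eqn:BestApproxtildev}. The main point requiring care is purely organisational: verifying that the $v^\pm$ and convex-side errors may be borrowed wholesale from the $V_N$ analysis of \cite{NonConvex}, and that both halves of the mesh $\mathcal{M}_{\rm nc}$ — graded towards $\xSB$ from either side — are correctly accounted for when \rf{eqn:BestApproxtildev} is invoked.
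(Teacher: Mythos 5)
Your proposal is correct and follows essentially the same route as the paper, which proves the theorem simply by combining the $V_N$ estimate \rf{eqn:BestApprox} with the bound \rf{eqn:BestApproxtildev} for $V$ on the refined mesh (plus its analogue on $\Gamma_{\rm nc}'$); your splitting $\tilde v = v + V$ and the observation that $\mathcal{M}_{\rm nc}$ refines the single-element mesh used for $v$ in $V_N$ is exactly the detail implicit in that one-line argument. The only cosmetic point is that the two estimates may come with different rates $\tau$, so one takes the smaller of the two, which still has the claimed dependence on $c$, $\sigma$ and the shape of $\Omega$.
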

We end this section by remarking that, by the quasi-optimality result \rf{eqn:QuasiOptimality}, the error bound \rf{eqn:BestApprox2} in Theorem~\ref{thm:BestApprox} implies similar exponential convergence results for the Galerkin approximation to $\pdonetext{u}{\bn}$, the resulting approximation to the solution $u$ in the domain (as computed using the representation formula \rf{RepThm}), and also the far field pattern - for details see \cite[\S6]{NonConvex}.

\section{Approximation properties of $E(r,\psi)$}
\label{sec:EApprox}

In this section we study certain approximation properties of the function 
\begin{align*}
E(r,\psi) = \re^{-\ri kr\cos{\psi}}\,{\rm Fr}(-\sqrt{2kr}\cos\left(\psi/2\right)),
\end{align*}
where $(r,\psi)$ are polar coordinates and ${\rm Fr}$ is the Fresnel integral defined in \rf{eqn:FresnelDef}, which are needed to prove the results in \S\ref{sec:polygons}.

The function $E(r,\psi)$ is $4\pi$-periodic in $\psi$, and satisfies the relations
\begin{alignat}{2}
\label{eqn:ESymmetry1}
E(r,-\psi) &= E(r,\psi), &\qquad \psi\in\R,\\
\label{eqn:ESymmetry2}
E(r,\psi+2\pi) &= \re^{-\ri k r\cos{\psi}} - E(r,\psi), & \qquad \psi\in\R,
\end{alignat}
the latter following from the well-known symmetry of the Fresnel integral,
\begin{align}
\label{eqn:FrSymmetry}
\Fr(-z) = 1-\Fr(z), \qquad z\in\C.
\end{align}
Physically, $E$ represents the exact solution to the diffraction of an incident plane wave by an infinite knife edge aligned with the line $\psi=0$ on which homogeneous Neumann (sound-hard) boundary conditions are imposed, with the incident wave propagating along the top side $\psi=0^+$ of the knife edge and parallel to it (see Figure~\ref{KnifeEdge}). This problem is in a sense the `simplest' canonical edge diffraction problem, because there is no specularly reflected wave. 
There is a single shadow boundary at $\psi=\pi$, the incident wave being present in $\psi<\pi$ and not in $\psi>\pi$. 
Indeed, $E$ satisfies the GTD-type approximation (cf.\ e.g.\ \cite[\S7]{DLMF})
\begin{align}
\label{eqn:EAsympt}
E(r,\psi) \sim
\begin{cases}
\re^{-\ri k r\cos\psi} + d(\psi)\frac{\re^{\ri k r}}{\sqrt{kr}}\left(1+\ord{\frac{1}{kr}}\right), & \psi \in [(4n+1)\pi+\delta,(4n+3)\pi-\delta],\\
d(\psi)\frac{\re^{\ri k r}}{\sqrt{kr}}\left(1+\ord{\frac{1}{kr}}\right), & \psi \in [(4n-1)\pi+\delta,(4n+1)\pi-\delta],
\end{cases}
\end{align}
as $kr\to \infty$, where $d(\psi) = -\re^{\ri \pi/4}/(2 \sqrt{2\pi}\cos{(\psi/2)})$, $n\in\Z$, $0<\delta<\pi$ is arbitrary and the approximations hold uniformly in $\psi$ in the stated intervals. The term $\re^{-\ri k r\cos\psi}$ represents a plane wave propagating from the direction $\psi=0$, and $d(\psi)\re^{\ri k r}/\sqrt{kr}$ represents a circular wave emanating from $r=0$ with directionality $d(\psi)$ (a diffraction coefficient). 
The approximation \rf{eqn:EAsympt} is invalid near the critical angles $\psi=(2n+1)\pi$ for $n\in\Z$, with the function $d(\psi)$ blowing up at these values of $\psi$. 

\begin{figure}[t]
\begin{center}
\begin{tikzpicture}[line cap=round,line join=round,>=triangle 45,x=1.0cm,y=1.0cm, scale=2.5]
\draw [ultra thick] (0,0) -- (2,0);
\draw [->] (0,0) -- (1,0);
\draw (1,-0.1) node {$y_1$};
\draw [->] (0,0) -- (0,1);
\draw (0.12,0.9) node {$y_2$};
\draw [dashed] (-2,0) -- (0,0);
\draw [<->] (-1.5,-0.07) -- (0,-0.07);
\draw (-0.74,-0.2) node {$R$};
\filldraw (-1.5,0) circle (1pt);
\draw (-1.32,-0.3) -- ++ (-0.8,0.8*1.73);
\draw (-2.2,1.1) node {$\mathcal{L}$};
\draw [<->] (-1.5-0.04*1.73,-0.04) -- ++ (-0.4*1,0.4*1.73);
\draw (-1.85,0.3) node {$s$};
\filldraw (-1.5-0.4,0+0.4*1.73) circle (1pt);
\draw (-1.85,0.8) node {$\by$};
\draw [thick,->] (-1.5-0.5,0+0.5*1.73) -- ++ (0.2*1.73,0.2) ;	
\draw (-1.8,1.1) node {$\bn$};
\draw [<->] (-0.05,0.05) -- (-1.5-0.35,0+0.4*1.73);
\draw (-0.95,0.5) node {$r$};
\draw [->] (0.4,0) arc (0:155:0.4);
\draw (0.35,0.4) node {$\psi$};
\draw (-1.3,0) arc (0:120:0.2);
\draw (-1.22,0.18) node {$\beta$};
\draw (1.5,0) -- (1.5,0.8);
\draw (1.4,0) -- (1.4,0.8);
\draw (1.6,0) -- (1.6,0.8);
\draw [<-] (1.2,0.4) -- (1.7,0.4);
\draw (1.84,0.55) node {$\re^{-\ri ky_1}$};
\end{tikzpicture} 
\caption{Geometry of diffraction by a knife edge (thick line).}
\label{KnifeEdge}
\end{center}
\end{figure}
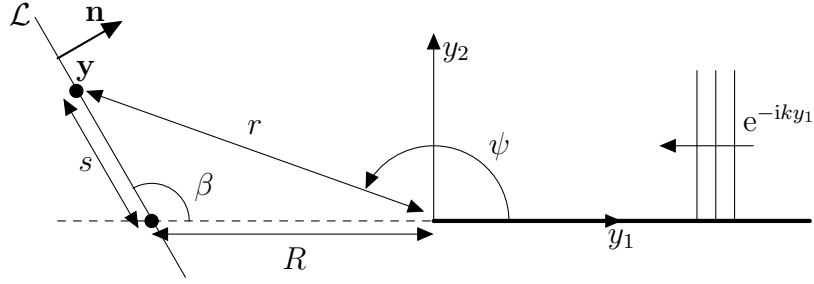 

We shall investigate the approximation properties of the function $E$ (and $\pdonetext{E}{\bn}$) on the line $\mathcal{L}$ defined in the Cartesian coordinates of Figure~\ref{KnifeEdge} by $\by=(y_1,y_2) = (-R,0)+s(\cos{\beta},\sin{\beta})$, where $R>0$, $0<\beta<\pi$ and $s\in\R$. In the context of the application considered in \S\ref{sec:polygons}, namely scattering by the polygon $\Omega$ of Figure~\ref{fig:Geometry}, a portion of the line $\mathcal{L}$ is to be identified with the nonconvex side $\Gamma_{\rm nc}$, after a suitable coordinate rotation. Using \rf{eqn:ESymmetry1}-\rf{eqn:ESymmetry2} it is easy to verify that when the approximation \rf{eqn:EAsympt} is applied in \rf{eqn:udDef}, the non-uniformity in \rf{eqn:EAsympt} manifests itself exactly at the incident and reflected shadow boundaries $\theta=\alpha$ and $\theta=3\pi-\alpha$ respectively.

The following lemma, which follows trivially from \rf{eqn:ESymmetry1}-\rf{eqn:ESymmetry2}, 
shows that $E$ can be written as a sum of the classical GO approximation (comprising the incident wave, cut off sharply across the shadow boundary), and a remainder term proportional to $\re^{\ri kr}$.
\begin{lem}
\label{lem:EDecomp}
The function $E$ can be decomposed as
\begin{align}
\label{eqn:EDecomp}
E(r,\psi) = E_{\rm GO}(r,\psi)
-\sign{(\pi-\psi)}F(\mu(r,\psi)) \re^{\ri kr},
\qquad r>0,\;\psi\in(0,2\pi),
\end{align}
where
\begin{align}
\label{}
E_{\rm GO}(r,\psi)=H(\pi-\psi)\re^{-\ri kr\cos\psi} = H(\pi-\psi)\re^{-\ri ky_1},
\end{align}
\begin{align}
\label{}
\mu(r,\psi) =\sqrt{2kr}\cos{\frac{|\psi|}{2}}
= \sqrt{kr(1+\cos{\psi})} = \sqrt{k(y_1+r)} = \sqrt{\frac{ky_2^2}{r-y_1}}>0,
\end{align}
for $\psi\in(0,\pi)\cup (\pi,2\pi)$, 
with $\mu=0$ for $\psi=\pi$, and
\begin{align}
\label{eqn:gDefn}
F(z)=\re^{-\ri z^2}{\rm Fr}(z) = \frac{1}{2}w(\re^{\ri\pi/4}z),
\end{align}
where $w(z)$ is the (scaled) complementary error function (see e.g. \cite{DLMF}, equation (7.2.3)).

\end{lem}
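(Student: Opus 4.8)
The plan is to work directly from the definitions of $E$ in \rf{eqn:udDef} and of $\Fr$ in \rf{eqn:FresnelDef}, treating separately the two subintervals $\psi\in(0,\pi)$ and $\psi\in(\pi,2\pi)$ according to the sign of $\cos(\psi/2)$, which is positive on the former and negative on the latter. Throughout I would use the key algebraic identity $\mu^2 = 2kr\cos^2(\psi/2) = kr(1+\cos\psi)$, obtained from the double-angle formula; this immediately yields $\re^{\ri\mu^2} = \re^{\ri kr}\re^{\ri kr\cos\psi}$, which is the phase bookkeeping that converts a factor $\re^{-\ri kr\cos\psi}$ multiplying $\Fr(\mu)$ into the single circular-wave factor $\re^{\ri kr}$ multiplying $F(\mu)$, via the definition \rf{eqn:gDefn} of $F$ (rewritten as $\Fr(\mu)=\re^{\ri\mu^2}F(\mu)$).

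First, on the illuminated side $\psi\in(0,\pi)$ the argument $-\sqrt{2kr}\cos(\psi/2)$ of $\Fr$ in $E$ is negative, so I would apply the Fresnel symmetry \rf{eqn:FrSymmetry} to write $\Fr(-\mu) = 1-\Fr(\mu)$, with $\mu = \sqrt{2kr}\cos(\psi/2)>0$. The leading $1$ reproduces the plane wave $\re^{-\ri kr\cos\psi}$, which is exactly $E_{\rm GO}$ here since $H(\pi-\psi)=1$, while the remaining term $-\re^{-\ri kr\cos\psi}\Fr(\mu)$ collapses to $-F(\mu)\re^{\ri kr}$ by the phase identity above; this matches the claimed formula since $\sign(\pi-\psi)=1$.

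Second, on the shadow side $\psi\in(\pi,2\pi)$ the argument $-\sqrt{2kr}\cos(\psi/2)$ is already positive and equals $\mu = \sqrt{2kr}|\cos(\psi/2)|>0$, so no symmetry is needed: $E(r,\psi) = \re^{-\ri kr\cos\psi}\Fr(\mu)$ collapses directly to $F(\mu)\re^{\ri kr}$. Here $H(\pi-\psi)=0$ and $\sign(\pi-\psi)=-1$, so the claimed formula again reduces to $+F(\mu)\re^{\ri kr}$, as required. The borderline value $\psi=\pi$ (where $\mu=0$) follows by continuity, and I would verify the alternative expressions for $\mu$ in the Cartesian coordinates $y_1=r\cos\psi$, $y_2=r\sin\psi$ as a routine check (e.g.\ $k(y_1+r)=kr(1+\cos\psi)=\mu^2$, and likewise $ky_2^2/(r-y_1)=\mu^2$).

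The only point needing genuine care — and effectively the whole content of the lemma — is the sign and branch bookkeeping: ensuring that $\mu$ is taken nonnegative so that the forms $\sqrt{2kr}|\cos(\psi/2)|=\sqrt{kr(1+\cos\psi)}=\sqrt{k(y_1+r)}$ all agree, and tracking how the Heaviside and sign factors assemble from the two cases into the single expression in \rf{eqn:EDecomp}. Everything else is a one-line substitution. This is consistent with the lemma being labelled as following essentially trivially from the symmetry relations \rf{eqn:ESymmetry1}--\rf{eqn:ESymmetry2}; indeed, one could alternatively reduce the shadow case to the illuminated case by using \rf{eqn:ESymmetry2} together with \rf{eqn:ESymmetry1} to replace $\psi$ by $2\pi-\psi\in(0,\pi)$, obtaining the same result without re-deriving the Fresnel manipulation.
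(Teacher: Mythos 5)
Your proof is correct and follows essentially the route the paper intends: the paper offers no written proof, merely asserting that the lemma ``follows trivially from \rf{eqn:ESymmetry1}--\rf{eqn:ESymmetry2}'', and your case split on the sign of $\cos(\psi/2)$ combined with the Fresnel symmetry \rf{eqn:FrSymmetry} and the phase identity $\re^{\ri\mu^2}=\re^{\ri kr}\re^{\ri kr\cos\psi}$ is exactly that computation (your remark that the shadow case can instead be reduced to the illuminated case via \rf{eqn:ESymmetry1}--\rf{eqn:ESymmetry2} matches the paper's pointer precisely, since \rf{eqn:ESymmetry2} is itself a repackaging of \rf{eqn:FrSymmetry}). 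Your attention to taking $\mu=\sqrt{2kr}\,|\cos(\psi/2)|\ge 0$ so that all the stated forms of $\mu$ agree is the right place to focus the care.
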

The function $F(z)$ is entire, and by \rf{eqn:FrSymmetry} satisfies the symmetry relation
\begin{align}
\label{eqn:gSymmetry}
F(-z)=e^{-\ri z^2}-F(z), \qquad z\in\C.
\end{align}
Also, $F(z)$ satisfies the differential equation (cf.\ \cite[(7.10.2)]{DLMF})
\begin{align*}
\label{}
F'(z)=\frac{\re^{\ri 3\pi/4}}{\sqrt{\pi}} -2\ri z F(z), \qquad z\in\C,
\end{align*}
using which one can derive the following decomposition for $\pdonetext{E}{\bn}$. 
\begin{lem}
\label{lem:dEdnDecomp}
On the line $\mathcal{L}$ the normal derivative $\pdonetext{E}{\bn}$ can be decomposed as
\begin{align}
\label{eqn:dEdnDecomp}
\pdone{E}{\bn}(r,\psi) = \pdone{E_{\rm GO}}{\bn}(r,\psi)
-\sign{(\pi-\psi)}G(r,\psi) \re^{\ri kr},
\qquad r>0,\;\psi\in(0,2\pi),
\end{align}
where 
\begin{align}
\label{eqn:vDef}
G(r,\psi) = \frac{\re^{\ri 3\pi/4}}{\sqrt{\pi}}\pdone{\mu}{\bn}(r,\psi) -\ri k \sin\beta\, F(\mu(r,\psi)).
\end{align}
\end{lem}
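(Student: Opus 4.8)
The plan is to obtain \rf{eqn:dEdnDecomp} by differentiating the decomposition \rf{eqn:EDecomp} of Lemma~\ref{lem:EDecomp} in the normal direction $\bn$ along $\mathcal{L}$. Fix a point of $\mathcal{L}$ with $\psi\neq\pi$. Since $H(\pi-\psi)$ and $\sign(\pi-\psi)$ depend only on $\psi$ and are locally constant away from the shadow boundary $\psi=\pi$, they contribute nothing to $\pdone{}{\bn}=\bn\cdot\bnabla$, so applying this operator to \rf{eqn:EDecomp} gives
\begin{align*}
\pdone{E}{\bn}(r,\psi) = \pdone{E_{\rm GO}}{\bn}(r,\psi) - \sign(\pi-\psi)\,\pdone{}{\bn}\!\left[F(\mu(r,\psi))\,\re^{\ri kr}\right],
\end{align*}
with the first term left in exactly the form it takes on the right-hand side of \rf{eqn:dEdnDecomp}.

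Next I would expand the remaining normal derivative by the product and chain rules and eliminate $F'$ using the differential equation $F'(z)=\re^{\ri 3\pi/4}/\sqrt\pi-2\ri zF(z)$ recorded above the lemma:
\begin{align*}
\pdone{}{\bn}\!\left[F(\mu)\re^{\ri kr}\right] = \left(\frac{\re^{\ri 3\pi/4}}{\sqrt\pi}\pdone{\mu}{\bn} + \ri F(\mu)\Big[k\pdone{r}{\bn} - 2\mu\pdone{\mu}{\bn}\Big]\right)\re^{\ri kr}.
\end{align*}
Comparing with the definition \rf{eqn:vDef} of $G$, it then remains only to show that the bracketed factor equals $-k\sin\beta$. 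For this I would use the algebraic identity $\mu^2=k(y_1+r)$ from Lemma~\ref{lem:EDecomp}: differentiating gives $2\mu\pdone{\mu}{\bn}=k\big(\pdone{y_1}{\bn}+\pdone{r}{\bn}\big)$, whence
\begin{align*}
k\pdone{r}{\bn} - 2\mu\pdone{\mu}{\bn} = -k\pdone{y_1}{\bn} = -k\,n_1,
\end{align*}
where $n_1=\bn\cdot\bnabla y_1$ is the first Cartesian component of $\bn$. Finally, from the parametrisation $\by=(-R,0)+s(\cos\beta,\sin\beta)$ of $\mathcal{L}$, the unit normal is $\bn=(\sin\beta,-\cos\beta)$ (see Figure~\ref{KnifeEdge}), so $n_1=\sin\beta$ and the bracket equals $-k\sin\beta$. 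Substituting back yields $\pdone{}{\bn}[F(\mu)\re^{\ri kr}]=G(r,\psi)\re^{\ri kr}$ and hence \rf{eqn:dEdnDecomp}.

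The only genuine subtlety, rather than a true obstacle, is the bookkeeping around the shadow boundary $\psi=\pi$: the decomposition \rf{eqn:EDecomp} splits the smooth function $E$ into two individually discontinuous pieces, so \rf{eqn:dEdnDecomp} should be read in the pointwise sense for $\psi\neq\pi$, the jump contributions of $E_{\rm GO}$ and the $\sign$-term cancelling in the full (smooth) field. Away from $\psi=\pi$ the calculation is a routine application of the chain rule, the differential equation for $F$, and the identity $\mu^2=k(y_1+r)$; it is this last relation, together with $n_1=\sin\beta$, that produces the clean $\sin\beta$ coefficient appearing in $G$, and getting this cancellation and the orientation of $\bn$ correct is the one point requiring care.
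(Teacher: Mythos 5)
Your proposal is correct and follows essentially the same route the paper intends (the paper merely states that the lemma is "derived using" the differential equation for $F$, which is exactly your calculation): differentiate the decomposition of Lemma~\ref{lem:EDecomp} along $\bn$, eliminate $F'$ via $F'(z)=\re^{\ri 3\pi/4}/\sqrt{\pi}-2\ri zF(z)$, and use $\mu^2=k(y_1+r)$ together with $n_1=\sin\beta$ to collapse the coefficient of $F(\mu)$ to $-\ri k\sin\beta$. Your remark that the identity is to be read pointwise away from $\psi=\pi$, where the two individually discontinuous pieces are differentiable, is the right reading of the statement.
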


Our goal is to investigate the approximation properties of 
\begin{align}
\label{eqn:gDef}
g(s;R,\beta):=G(r(s),\psi(s))
\end{align}
as a function of $s\in\R$, for fixed $R>0$ and $0<\beta<\pi$. By symmetry we can without loss of generality restrict attention to $s>0$, since $g(-s;R,\beta)=g(s;R,\pi-\beta)$. Furthermore, Lemma~\ref{lem:dEdnDecomp} reduces the study of $g(s;R,\beta)$ to the study of the functions 
\begin{align}
\label{eqn:fDef}
f(s;R,\beta)&:=F(\mu(s)),
\\
\label{eqn:hDef}
h(s;R,\beta)&:=\pdone{\mu}{\bn}(r(s),\psi(s)).
\end{align}
We begin with the former. We will show that $f(s;R,\beta)$ is analytic and bounded in a $k$-independent complex neighbourhood of the positive real $s$-axis. We start by reviewing some elementary properties of the function $F$.
\begin{lem}
\label{gLem}
There exists a constant $0<C<2$ such that 
\begin{align}
\label{eqn:gBound}
|F(z)|\leq C, \qquad \arg{z}\in[-\tfrac{\pi}{2},\pi].
\end{align}
In the sector $\arg{z}\in(-\pi,\tfrac{\pi}{2})$, $F(z)$ grows exponentially fast as $|z|\to\infty$, with
\begin{align}
\label{eqn:ExpGrowth}
e^{|z|^2\sin{(2\arg{z})}}-\frac{1}{2}\leq |F(z)|\leq e^{|z|^2\sin{(2\arg{z})}}+\frac{1}{2}, \qquad \arg{z}\in(-\pi,-\tfrac{\pi}{2}).
\end{align}
\end{lem}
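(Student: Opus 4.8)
The plan is to prove the two claims about $F(z)=\re^{-\ri z^2}\Fr(z)$ separately, working directly from the integral definition \rf{eqn:FresnelDef} of $\Fr$ together with the symmetry \rf{eqn:gSymmetry}, since everything reduces to understanding the Gaussian-type integral $\int_z^\infty \re^{\ri t^2}\,\rd t$ in various sectors of the complex plane.

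For the uniform bound \rf{eqn:gBound} in the sector $\arg z\in[-\tfrac{\pi}{2},\pi]$, first I would handle the two natural subsectors. In the sector $\arg z\in[-\tfrac{\pi}{4},\tfrac{3\pi}{4}]$ the phase $\ri t^2$ has nonpositive real part along the ray of steepest descent, so I would deform the contour in the integral $\int_z^\infty \re^{\ri t^2}\,\rd t$ to the ray $z+\re^{\ri\pi/4}\rho$, $\rho\ge 0$, along which $\re^{\ri t^2}$ decays like a genuine Gaussian; parametrising and using $|\re^{-\ri z^2}|=\re^{|z|^2\sin(2\arg z)}\le 1$ there gives a finite bound. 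For the remaining angular ranges I would exploit the symmetry \rf{eqn:gSymmetry}, which maps $\arg z$ to $\arg(-z)=\arg z\pm\pi$ and lets me trade a bound on $F(z)$ for a bound on $F(-z)$ plus the harmless factor $\re^{-\ri z^2}$; combined with the decay estimate this reduces the full closed sector $[-\tfrac{\pi}{2},\pi]$ to the region already controlled. The strict inequality $C<2$ should follow by tracking the constants, using that $\Fr$ is a probability-type normalisation with $\Fr(z)\to 0$ and $\Fr(z)\to 1$ at the two ends of the real axis, so $|F|$ never saturates the crude triangle-inequality value.

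For the exponential growth estimate \rf{eqn:ExpGrowth} in $\arg z\in(-\pi,-\tfrac{\pi}{2})$, the mechanism is that $\re^{-\ri z^2}$ now blows up like $\re^{|z|^2\sin(2\arg z)}$ (note $\sin(2\arg z)>0$ there), while $\Fr(z)\to 1$ as $|z|\to\infty$ in this sector. Concretely I would write $F(z)=\re^{-\ri z^2}\big(1-(1-\Fr(z))\big)$ and show that $1-\Fr(z)$ is exponentially small relative to $\re^{-\ri z^2}$, so that $|F(z)|=\re^{|z|^2\sin(2\arg z)}\,|1+o(1)|$; pinning down the additive $\pm\tfrac12$ bounds amounts to showing $|\re^{-\ri z^2}|\,|1-\Fr(z)|\le \tfrac12$ on the whole sector, which I would get from the steepest-descent bound on the tail integral $1-\Fr(z)=\tfrac{\re^{-\ri\pi/4}}{\sqrt\pi}\int_{-\infty}^{z}\re^{\ri t^2}\,\rd t$ (equivalently via \rf{eqn:gSymmetry}).

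The main obstacle I expect is the boundary case near $\arg z=-\tfrac{\pi}{2}$, where the two regimes meet: there $\sin(2\arg z)\to 0$, the Gaussian decay along the steepest-descent ray degenerates to mere oscillation, and the clean contour-deformation argument loses its convergence cushion. Handling $\arg z$ in a neighbourhood of $-\tfrac{\pi}{2}$ carefully—so that the uniform bound \rf{eqn:gBound} holds right up to the endpoint and the growth estimate \rf{eqn:ExpGrowth} degrades continuously to it—will require the most care, probably by keeping the contour deformation at a fixed small angle off steepest descent and estimating the resulting integral uniformly in $\arg z$ rather than taking limits.
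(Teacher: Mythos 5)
Your argument is correct in substance but reaches \rf{eqn:gBound} by a genuinely different route from the paper. The paper starts from the DLMF integral representation $w(z)=\frac{2z}{\pi\ri}\int_0^\infty \re^{-t^2}(t^2-z^2)^{-1}\,\rd t$ and derives from it three sector-wise integral representations of $F$ (its \rf{eqn:gIntegralReps}), from which it reads off $|F|\le\tfrac12$ on $\arg z\in[0,\tfrac{\pi}{2}]$ and $|F|\le 4\pi^{-3/2}\Gamma(5/4)^2\approx 0.59$ on $|\arg z|\le\tfrac{\pi}{4}$ and on $\arg z\in[\tfrac{\pi}{4},\tfrac{3\pi}{4}]$, giving $C\approx 1.59$ once \rf{eqn:gSymmetry} is used to cover the remaining sub-sectors. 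You instead rotate the defining contour onto the steepest-descent ray $t=z+\re^{\ri\pi/4}\rho$; carried out, this gives the single identity $F(z)=\pi^{-1/2}\int_0^\infty \re^{2\ri z\re^{\ri\pi/4}\rho-\rho^2}\,\rd\rho$, hence $|F|\le\tfrac12$ uniformly on the whole closed sector $\arg z\in[-\tfrac{\pi}{4},\tfrac{3\pi}{4}]$ and the slightly better constant $C=\tfrac32$ after symmetry. Both proofs then obtain \rf{eqn:ExpGrowth} in the same way, by writing $F(z)=\re^{-\ri z^2}-F(-z)$ and using the $\tfrac12$-bound on $F(-z)$ for $\arg(-z)\in(0,\tfrac{\pi}{2})$; note that your quantity $\re^{-\ri z^2}\bigl(1-\Fr(z)\bigr)$ is exactly $F(-z)$, so your tail estimate is the same symmetry argument in disguise.

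Two details of your sketch need repair, though neither is fatal. First, the inequality $|\re^{-\ri z^2}|\le 1$ that you invoke on $[-\tfrac{\pi}{4},\tfrac{3\pi}{4}]$ is false on part of that sector (at $\arg z=\tfrac{\pi}{4}$ one has $|\re^{-\ri z^2}|=\re^{|z|^2}$); it is also unnecessary, because the endpoint factor $\re^{\ri z^2}$ produced by the deformed integral cancels the prefactor $\re^{-\ri z^2}$ exactly, and what actually needs bounding is the cross term $|\re^{2\ri z\re^{\ri\pi/4}\rho}|=\re^{-2\rho|z|\sin(\arg z+\pi/4)}\le 1$, which holds precisely when $\arg z\in[-\tfrac{\pi}{4},\tfrac{3\pi}{4}]$. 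Second, the difficulty you anticipate near $\arg z=-\tfrac{\pi}{2}$ does not arise: convergence on the deformed ray is always supplied by the Gaussian $\re^{-\rho^2}$ rather than by the angle of $z$, and the boundary value $\arg z=-\tfrac{\pi}{2}$ is handled through the symmetry from $\arg(-z)=\tfrac{\pi}{2}$, where $|\re^{-\ri z^2}|=1$; no uniform-in-angle limiting argument is required.
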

\begin{proof}
The integral representation \cite[Equation (7.7.2)]{DLMF}
\begin{align}
\label{}
w(z) = \frac{2z}{\pi \ri}\int_0^\infty \frac{\re^{-t^2} \, \rd t}{t^2-z^2}, \qquad \im{z}>0,
\end{align}
implies (by appropriate contour deformations, changes of variable and analytic continuation arguments) the following integral representations for $F$:
\begin{align}
\label{eqn:gIntegralReps}
F(z) = 
\begin{dcases}
\frac{\re^{-\ri \pi/4}}{\pi}\int_0^\infty \frac{\re^{-z^2t^2} \, \rd t}{t^2-\ri}
, & -\tfrac{\pi}{4} \leq \arg{z}\leq\tfrac{\pi}{4},\\
\frac{1}{\pi}\int_0^\infty \frac{\re^{\ri z^2t^2} \, \rd t}{t^2+1}
, & 0 \leq \arg{z}\leq\tfrac{\pi}{2},\\
\frac{\re^{\ri \pi/4}}{\pi}\int_0^\infty \frac{\re^{z^2t^2} \, \rd t}{t^2+\ri}
, & \tfrac{\pi}{4} \leq \arg{z}\leq \tfrac{\pi}{4},
\end{dcases}
\end{align}
from which it follows that
\begin{align}
\label{eqn:gBoundWorking}
|F(z)| \leq 
\begin{dcases}
\frac{1}{\pi}\int_0^\infty \frac{\rd t}{t^2+1} = \frac{1}{2}, & 0 \leq \arg{z}\leq\tfrac{\pi}{2},\\
\frac{1}{\pi}\int_0^\infty \frac{\rd t}{\sqrt{t^4+1}} = \frac{4}{\pi^{3/2}}( \Gamma(5/4))^2,
& -\tfrac{\pi}{4} \leq \arg{z}\leq\tfrac{\pi}{4} \textrm{ or } \tfrac{\pi}{4} \leq \arg{z}\leq \tfrac{3\pi}{4},
\end{dcases}
\end{align}
where $\Gamma(z)$ is the usual Gamma function. Using the symmetry relation \rf{eqn:gSymmetry}, the bound \rf{eqn:gBound} then follows with $C = 1+ (4/\pi^{3/2})( \Gamma(5/4))^2 \approx 1.59$. (This bound is not sharp - numerical evaluations suggest that the optimal constant satisfies $C\approx 1.17$.) The claimed exponential growth in \rf{eqn:ExpGrowth} follows from \rf{eqn:gSymmetry} and the first estimate in \rf{eqn:gBoundWorking}.
\end{proof}

\begin{rem}
A more careful analysis of the integrals in \rf{eqn:gIntegralReps} reveals that $F(z)$ decays like $\ord{1/|z|}$ as $z\to\infty$ in the sector $\arg{z}\in(-\tfrac{\pi}{2},\pi)$, the decay being nonuniform as $\arg{z}$ approaches $-\tfrac{\pi}{2}$ or $\pi$, along which directions $F(z)$ is oscillatory, being asymptotic to $\re^{-\ri z^2}$ as $|z|\to\infty$. However, the simple bound \rf{eqn:gBound} will be sufficient for our purposes in this paper.
\end{rem}


Now, for $s>0$ we have
\begin{align*}
\mu(s) &= \sqrt{k\left(-R +s\cos{\beta} + r(s)\right)},
\end{align*}
where
\begin{align}
\label{innerarg}
r(s)=\sqrt{(-R+s\cos\beta)^2+(s\sin\beta)^2}=\sqrt{R^2+s^2-2sR\cos{\beta}}. 
\end{align}
Equivalently,
\begin{align}
\label{EtasDef2}
\mu(s) =  \frac{\sqrt{k}s\sin{\beta}}{\sqrt{R -s\cos{\beta} + r(s)}},
\end{align}
and we adopt \rf{EtasDef2} as the formula for the analytic continuation of $\mu(s)$ from $s>0$ into the complex $s$-plane, with the complex square roots in \rf{EtasDef2} and \rf{innerarg} taking the principal value. 
The function $r(s)$ has branch points 
at $s=R\re^{\pm \ri \beta}$, with associated branch cuts running from $R\re^{\pm i \beta}$ to $R\cos{\beta}\pm \ri\infty$ respectively (see Figure~\ref{S0}). We denote the resulting cut $s$-plane by
\[\cC_{R,\beta}=\C\setminus\big\lbrace R\cos\beta + \ri \{[R\sin\beta,\infty) \cup (-\infty,-R\sin\beta]\}\big\rbrace.\]
The outer square root in the denominator of \rf{EtasDef2} does not introduce any further branch points; 
in fact, one can show that ${\rm Re}[R -s\cos{\beta} + r(s)]>0$ for all $s\in \cC_{R,\beta}$.
Hence $\mu(s)$ defined by \rf{EtasDef2} is analytic in $\cC_{R,\beta}$. 
The chain rule, combined with Lemma~\ref{gLem} and some tedious but elementary calculations, then implies the following lemma.

\begin{lem}
\label{hlem}
The function $f(s;R,\beta)$ defined by \rf{eqn:fDef}, with $\mu(s)$ defined by \rf{EtasDef2} and $r(s)$ by \rf{innerarg} (with square roots taking principle values), is analytic in the cut plane $\cC_{R,\beta}$. Furthermore, 
\begin{align}
\label{eqn:fBound}
|f(s;R,\beta)|\leq C, \qquad s\in\cR_{R,\beta}\subset \cC_{R,\beta} ,
\end{align}
where $0<C<2$ is the constant from \rf{eqn:gBound} and (see Figure~\ref{S0} for an illustration)
\begin{align}
\label{eqn:RRbetaDef}
\cR_{R,\beta} =
\begin{cases}
\cC_{R,\beta}\cap\left( \mathcal{H}\cup \mathcal{H}_{R,\beta}\cup \mathcal{E}_{R,\beta} \right), & 0<\beta<\tfrac{\pi}{2},\\
\cC_{R,\beta}\cap\left( \mathcal{H}\cup \mathcal{H}_{R,\beta}\right), & \beta=\tfrac{\pi}{2},\\
\cC_{R,\beta}\cap\left( \mathcal{H}\cup \left(\mathcal{H}_{R,\beta}\setminus \mathcal{E}_{R,\beta}\right) \right), & \tfrac{\pi}{2}<\beta<\pi,
\end{cases}
\end{align}
where
\begin{align*}
\label{}
\mathcal{H}= \left\{ s:\im{s}>0\right\},
\qquad
\mathcal{H}_{R,\beta} = \left\{ s:\real{s}>R\cos{\beta}\right\},
\end{align*}
and $\mathcal{E}_{R,\beta}$ is the interior of the ellipse
\begin{align}
\label{EllipseEqn}
\im{s}^2+\frac{\left(\real{s}-R\cos{\beta}\right)^2}{\cos^2{\beta}}=R^2,
\end{align}
which has its foci at $R\re^{\pm \ri \beta}$, semi-minor axis equal to $R\cos{\beta}$, and semi-major axis equal to $R$.
(For $\beta=\tfrac{\pi}{2}$, the ellipse degenerates to the line segment $[-\ri R,\ri R]$ and $\mathcal{E}_{R,\beta}$ is empty.) 

In particular, for $\tfrac{\pi}{2}\leq \beta < \pi$ the region $\cR_{R,\beta}$ contains the right half-plane $H_{\pi/2}=\{s:\real{s}>0\}$. For $0<\beta < \tfrac{\pi}{2}$ the region $\cR_{R,\beta}$ contains the set
\begin{align}
\label{eqn:SemiArrow}
\cS_{R,\beta}= \left\lbrace s\in\C:\,\left|\im{s}\right|<R\sin\beta \textrm{ and } \left|\arg{s}\right|< \arctan\sqrt{(11+5\sqrt{5})/2}\right\rbrace.
\end{align}
\end{lem}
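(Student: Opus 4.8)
The plan is to separate the two assertions. Analyticity of $f(s;R,\beta)=F(\mu(s))$ on $\cC_{R,\beta}$ is immediate: the paragraph preceding the lemma establishes that $\mu(s)$, defined by \rf{EtasDef2}, is analytic on $\cC_{R,\beta}$, while $F$ is entire (Lemma~\ref{gLem}), so the composition is analytic. For the bound \rf{eqn:fBound} the essential observation is that, by Lemma~\ref{gLem}, $|F(z)|\leq C$ holds exactly on the sector $\arg z\in[-\tfrac{\pi}{2},\pi]$, whose complement is the open third quadrant $Q:=\{\real{z}<0,\,\im{z}<0\}$. Thus it suffices to prove that $\mu$ maps $\cR_{R,\beta}$ into $\C\setminus Q$, i.e.\ that $\mu(s)\notin Q$ for all $s\in\cR_{R,\beta}$.

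The device I would use is the identity $\mu(s)^2=k\left(r(s)+y_1\right)=k\left(r(s)-R+s\cos\beta\right)$ (equivalent to \rf{EtasDef2}, since both square the same quantity), valid on $\cC_{R,\beta}$, together with $k>0$. Because $\mu$ can pass from one open quadrant to another only where it meets a coordinate axis, and there $\mu^2$ is real, the relevant dividing curves lie in $\{\im{(\mu^2)}=0\}=\{\im{r(s)}=-\cos\beta\,\im{s}\}$. Writing $r(s)^2=(s-R\re^{\ri\beta})(s-R\re^{-\ri\beta})$ and solving, I would show this locus consists of precisely three pieces: the real $s$-axis, the segment of the vertical line $\real{s}=R\cos\beta$ joining the foci $R\re^{\pm\ri\beta}$, and the ellipse $\mathcal{E}_{R,\beta}$ of \rf{EllipseEqn}. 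The ellipse computation is the cleanest: parametrising $s=R\cos\beta(1+\cos\vartheta)+\ri R\sin\vartheta$ gives $r(s)^2=R^2(\cos\vartheta+\ri\cos\beta\sin\vartheta)^2$, and the branch fixed by analytic continuation from $s=0$ (where $\mu=0$ and $r=R$) selects $r(s)=-R(\cos\vartheta+\ri\cos\beta\sin\vartheta)$, whence $\mu^2=-kR\sin^2\beta(1+\cos\vartheta)\leq0$; so $\mathcal{E}_{R,\beta}$ maps onto (part of) the imaginary $\mu$-axis. Crucially, these three curves are exactly the boundary components appearing in the definition \rf{eqn:RRbetaDef} of $\cR_{R,\beta}$, namely the boundaries of $\mathcal{H}$, $\mathcal{H}_{R,\beta}$ and $\mathcal{E}_{R,\beta}$.

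With the plane cut along these curves, I would determine the quadrant of $\mu$ on each complementary component by continuity from the positive real $s$-axis (where $\mu>0$) and by tracking the sign of the square root in \rf{EtasDef2}; a useful shortcut is that wherever $\im{(\mu^2)}=k(\im{r(s)}+\cos\beta\,\im{s})<0$ the value $\mu$ lies automatically in the second or fourth quadrant, hence outside $Q$. This analysis identifies the union of components on which $\mu\notin Q$ with $\cR_{R,\beta}$, giving \rf{eqn:fBound}; the three cases of \rf{eqn:RRbetaDef} arise from the sign of $\cos\beta$, which governs whether $\mathcal{E}_{R,\beta}$ is a ``good'' region and how it sits relative to $\mathcal{H}_{R,\beta}$.

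Finally I would deduce the two explicit inclusions. For $\tfrac{\pi}{2}\leq\beta<\pi$ we have $R\cos\beta\leq0$, so the branch cuts and the rightmost point of the ellipse lie in $\{\real{s}\leq0\}$; hence $H_{\pi/2}=\{\real{s}>0\}$ avoids the cuts and lies in $\mathcal{H}\cup\mathcal{H}_{R,\beta}$, giving $H_{\pi/2}\subset\cR_{R,\beta}$. For $0<\beta<\tfrac{\pi}{2}$ the inclusion $\cS_{R,\beta}\subset\cR_{R,\beta}$ reduces, after discarding the portions already in $\mathcal{H}$ or $\mathcal{H}_{R,\beta}$, to checking that the single corner of the strip \rf{eqn:SemiArrow} at $s=R\sin\beta\cot\phi-\ri R\sin\beta$ (with $\phi=\arctan\sqrt{(11+5\sqrt5)/2}$) lies inside $\mathcal{E}_{R,\beta}$ for every $\beta$; by convexity of the ellipse this corner is the only binding point. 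That condition rearranges to $\cot\phi\geq\cos\beta\sqrt{(1-\cos\beta)/(1+\cos\beta)}$, and maximising the right-hand side over $\beta$ (the maximum occurring at $\cos\beta=(\sqrt5-1)/2$) yields exactly $\tan\phi=\sqrt{(11+5\sqrt5)/2}$, which explains the appearance of this constant. The main obstacle throughout is the branch/sign bookkeeping for the square root in \rf{EtasDef2} across the curves $\{\im{(\mu^2)}=0\}$: it is the sign of $\mu$, not of $\mu^2$, that decides membership of $Q$, so one must carefully distinguish $\arg\mu=+\tfrac{\pi}{2}$ from $-\tfrac{\pi}{2}$, and $\arg\mu=0$ from $\pi$, when assembling the good region.
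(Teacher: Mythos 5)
Your proposal is correct and takes essentially the same route as the paper, which likewise reduces \rf{eqn:fBound} to showing $\arg{\mu(s)}\in[-\tfrac{\pi}{2},\pi]$ on $\cR_{R,\beta}$ (then applies Lemma~\ref{gLem} and the chain rule) and obtains the inclusion \rf{eqn:SemiArrow} by precisely your computation: intersecting $\partial\mathcal{E}_{R,\beta}$ with $\im{s}=\pm R\sin\beta$ at $R(\cos\beta(1-\cos\beta)\pm\ri\sin\beta)$ and minimising $\sin\beta/(\cos\beta(1-\cos\beta))$ over $\beta$. One harmless slip in your fleshing-out of the paper's ``elementary calculations'': the open segment of $\real{s}=R\cos\beta$ between the foci is not in the locus where $\im{\mu(s)^2}=0$, since there $r(s)=\sqrt{R^2\sin^2\beta-\im{s}^2}>0$ and hence $\im{\mu(s)^2}=k\cos\beta\,\im{s}\neq 0$ off the real axis; the genuine dividing curves are the real axis, the ellipse and the branch cuts, and your extra cut merely over-refines the partition.
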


\begin{proof}
Elementary calculations show that $\arg{\mu(s)}\in[-\tfrac{\pi}{2},\pi]$ inside the region $\cR_{R,\beta}\subset \cC_{R,\beta}$ defined in \rf{eqn:RRbetaDef}. The bound \rf{eqn:fBound} then follows from the chain rule and Lemma~\ref{gLem}. For \rf{eqn:SemiArrow}, one observes that for $0<\beta < \tfrac{\pi}{2}$ the boundary of the ellipse $\mathcal{E}_{R,\beta}$ intersects the lines $\im{s}=\pm R\sin\beta$ at the points $s=R\left(\cos\beta(1-\cos\beta) \pm \ri \sin\beta\right)$, and that the ratio $\sin\beta/(\cos\beta(1-\cos\beta))$ is minimised over $0<\beta < \tfrac{\pi}{2}$ at $\beta=2\arctan\sqrt{\sqrt{5}-2}$, with value $\sqrt{(11+5\sqrt{5})/2}$.
\end{proof}

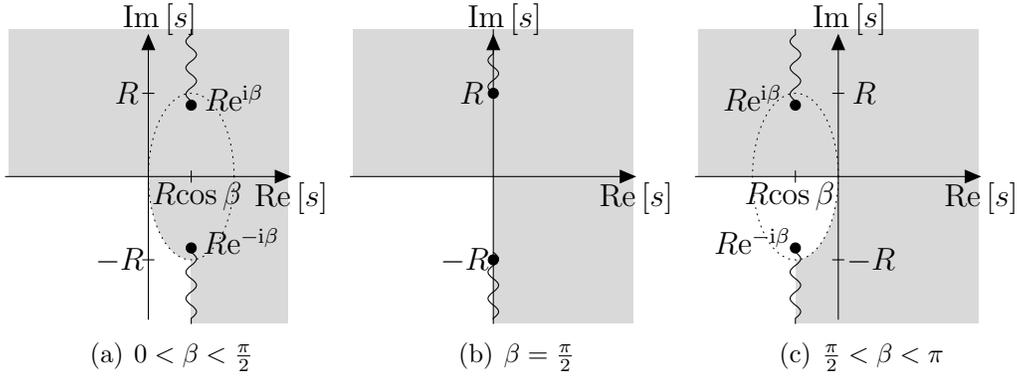
\begin{figure}[t]
\begin{center}
\subfigure[$0<\beta<\tfrac{\pi}{2}$\label{S0a}]{
\begin{tikzpicture}[line cap=round,line join=round,>=triangle 45,x=1.0cm,y=1.0cm, scale=1.9]
\fill[gray!30!white] (0.3,-1.03) -- (0.3,1.03) -- (0.98,1.03) -- (0.98,-1.03);
\fill[gray!30!white] (-0.98,0) -- (-0.98,1.03) -- (0.98,1.03) -- (0.98,0);
\fill[gray!30!white] (0.3,0) ellipse (0.3 and 0.583);
\draw[dotted] (0.3,0) ellipse (0.3 and 0.583);
\draw [->] (-1,0) -- (1,0);
\draw (1,-0.15) node {$\real{s}$};
\draw [->] (0,-1) -- (0,1);
\draw (0.08,1.1) node {$\im{s}$};
\filldraw (0.3,0.5) circle (1pt);
\draw (0.6,0.55) node {$R\re^{\ri\beta}$};
\draw[decorate,decoration={coil,aspect=0,segment length=10pt,amplitude=2pt}] (0.3,0.53) -- (0.3,1.03);
\filldraw (0.3,-0.5) circle (1pt);
\draw (0.65,-0.45) node {$R\re^{-\ri\beta}$};
\draw[decorate,decoration={coil,aspect=0,segment length=10pt,amplitude=2pt}] (0.3,-0.53) -- (0.3,-1.03);
\draw (0.3,-0.04) -- (0.3,0.04) ;
\draw (0.34,-0.14) node {$R\!\cos{\beta}$};
\draw (-0.04,0.583) -- (0.04,0.583) ;
\draw (-0.15,0.583) node {$R$};
\draw (-0.04,-0.583) -- (0.04,-0.583) ;
\draw (-0.2,-0.583) node {$-R$};
\end{tikzpicture} }
\hs{-4}
\subfigure[$\beta=\tfrac{\pi}{2}$\label{S0b}]{
\begin{tikzpicture}[line cap=round,line join=round,>=triangle 45,x=1.0cm,y=1.0cm, scale=1.9]
\fill[gray!30!white] (0,-1.03) -- (0,1.03) -- (0.98,1.03) -- (0.98,-1.03);
\fill[gray!30!white] (-0.98,0) -- (-0.98,1.03) -- (0.98,1.03) -- (0.98,0);
\draw [->] (-1,0) -- (1,0);
\draw (1,-0.15) node {$\real{s}$};
\draw [->] (0,-1) -- (0,1);
\draw (0.08,1.1) node {$\im{s}$};
\filldraw (0,0.583) circle (1pt);
\draw[decorate,decoration={coil,aspect=0,segment length=10pt,amplitude=2pt}] (0,0.583) -- (0,1.03);
\filldraw (0,-0.583) circle (1pt);
\draw[decorate,decoration={coil,aspect=0,segment length=10pt,amplitude=2pt}] (0,-0.53) -- (0,-1.03);
\draw (-0.15,0.583) node {$R$};
\draw (-0.2,-0.583) node {$-R$};
\end{tikzpicture} }
\hs{-4}
\subfigure[$\tfrac{\pi}{2}<\beta< \pi$\label{S0c}]{
\begin{tikzpicture}[line cap=round,line join=round,>=triangle 45,x=1.0cm,y=1.0cm, scale=1.9]
\fill[gray!30!white] (-0.3,-1.03) -- (-0.3,1.03) -- (0.98,1.03) -- (0.98,-1.03);
\fill[white] (-0.3,0) ellipse (0.3 and 0.583);
\fill[gray!30!white] (-0.98,0) -- (-0.98,1.03) -- (0.98,1.03) -- (0.98,0);
\draw[dotted] (-0.3,0) ellipse (0.3 and 0.583);
\draw [->] (-1,0) -- (1,0);
\draw (1,-0.15) node {$\real{s}$};
\draw [->] (0,-1) -- (0,1);
\draw (0.08,1.1) node {$\im{s}$};
\filldraw (-0.3,0.5) circle (1pt);
\draw (-0.6,0.55) node {$R\re^{\ri\beta}$};
\draw[decorate,decoration={coil,aspect=0,segment length=10pt,amplitude=2pt}] (-0.3,0.53) -- (-0.3,1.03);
\filldraw (-0.3,-0.5) circle (1pt);
\draw (-0.6,-0.45) node {$R\re^{-\ri\beta}$};
\draw[decorate,decoration={coil,aspect=0,segment length=10pt,amplitude=2pt}] (-0.3,-0.53) -- (-0.3,-1.03);
\draw (-0.3,-0.04) -- (-0.3,0.04) ;
\draw (-0.34,-0.14) node {$R\!\cos{\beta}$};
\draw (-0.04,0.583) -- (0.04,0.583) ;
\draw (0.18,0.583) node {$R$};
\draw (-0.04,-0.583) -- (0.04,-0.583) ;
\draw (0.23,-0.583) node {$-R$};
\end{tikzpicture} }
\caption{The cut complex $s$-plane $\cC_{R,\beta}$ and the region $\cR_{R,\beta}\subset \cC_{R,\beta}$ (shaded) in which $\arg{\mu(s)}\in[-\tfrac{\pi}{2},\pi]$. The boundary of the ellipse $\mathcal{E}_{R,\beta}$ described by equation \rf{EllipseEqn} is indicated by the dotted curve.}
\label{S0}
\end{center}
\end{figure} 
%
%

\begin{rem}
\label{OriginRem}
The region in which $f(s;R,\beta)$ is analytic and bounded, with a bound independent of $k$, can be extended to include a neighbourhood of the origin $s=0$. 
For $0< \beta < \pi$ let
$\eps_*=(R/2)\min\{1,1/(\sin\beta\sqrt{kR})\}$.
Then $f(s;R,\beta)$ is analytic in the ball $B_{\eps_*}(0)\subset\cC_{R,\beta}$. Moreover, if $s\in B_{\eps_*}(0)$ then \mbox{$R-\real{s}\cos{\beta}>R-\eps_*|\cos{\beta}|>R/2$}, and since $\real{r(s)}>0$ we can then estimate, for $s\in B_{\eps_*}(0)$,
\begin{align}
\label{}
|\mu(s)|=\frac{\sqrt{k}|s|\sin\beta}{\sqrt{\left|R-s\cos{\beta}+r(s)\right|}}&\leq \frac{\sqrt{k}|s|\sin\beta}{\sqrt{\left|\real{R-s\cos{\beta}+r(s)}\right|}} 
\leq \sqrt{\frac{2k}{R}}\eps_*\sin\beta
\leq \frac{1}{\sqrt{2}}.
\end{align}
Hence $\mu(s)$ is bounded in $B_{\eps_*}(0)$ with a bound that is independent of $k$, and this statement transfers to $f(s;R,\beta)$ because of the entirety of $F(\mu)$. 
However, although the bound on $|f(s;R,\beta)|$ implied by this result is independent of $k$, the region on which it holds varies with $k$ (through $\eps_*$); moreover shrinking as $k\to\infty$. In fact, no $k$-independent bound can hold on any $k$-independent neighbourhood of the origin. 
To see this, note that any such neighbourhood would include a point $s_\eps=\eps\,\re^{-\ri 3\pi/4}$ for some $0<\eps<1$ independent of $k$. It is easy to check that 
$\arg{\mu(s_\eps)}\in(-\tfrac{7\pi}{8},-\tfrac{9\pi}{16})$ and $|\mu(s_\eps)|=C\sqrt{kR}$ for some $C>0$ depending only on $\eps$ and $\beta$. Lemma~\ref{gLem} then implies that $|f(s_\eps;R,\beta)|$ tends to infinity exponentially fast as $k\to\infty$.
\end{rem}

We now turn to $h(s;R,\beta)=\pdonetext{\mu}{\bn}(r(s),\psi(s))$, which for $s>0$ can be written as 
\begin{align}
\label{eqn:dmudnDef}
h(s;R,\beta)
= \frac{k\sin\beta\,(r(s)-R)}{2r(s)\mu(s)},
\end{align}
and we adopt this formula as the analytic continuation of $h(s;R,\beta)$ to complex $s$. From \rf{eqn:dmudnDef} we see that $h(s;R,\beta)$ is analytic in the cut plane $\cC_{R,\beta}$, because the only singularities in $h(s;R,\beta)$ are at the branch points of $r(s)$, i.e\ at $s=R\re^{\pm\ri \beta}$. (The apparent singularity at $s=0$, where $\mu(0)=0$, is removable, since $r(0)=R$.) Moreover, one can show that, for any $0<\delta<1$,
\begin{align*}
\label{}
|h(s;R,\beta)|
\leq \frac{Ck}{\sqrt{kR\sin\beta}}, \qquad \left|\im{s}\right|\leq (1-\delta)R\sin\beta,
\end{align*}
where the constant $C>0$ depends only on $\delta$.

Combining these observations with the results in Lemma~\ref{hlem} allow us to prove the following theorem, which forms the basis of our $hp$ approximation results in \S\ref{sec:polygons}.
\begin{thm}
\label{cor:gReg}
For any $0<\delta<1$, the function $g(s;R,\beta)$ is analytic and bounded in 
\begin{align*}
\cS^\delta_{R,\beta}=\left\lbrace s\in\C:\, \left|\im{s}\right|<(1-\delta)R\sin\beta \textrm{ and } \left|\arg{s}\right|< \arctan\sqrt{(11+5\sqrt{5})/2}\right\rbrace;
\end{align*}
specifically, there exists a constant $C>0$ depending only on $\delta$ such that
\begin{align}
\label{}
|g(s;R,\beta)| \leq Ck\left(1+\frac{1}{\sqrt{kR\sin\beta}}\right), \qquad s\in\cS^\delta_{R,\beta}.
\end{align}
\end{thm}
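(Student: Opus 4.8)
The plan is to exploit the fact that, by the definition of $G$ in Lemma~\ref{lem:dEdnDecomp} together with \rf{eqn:fDef}--\rf{eqn:hDef}, the function $g$ is an explicit linear combination of the two functions whose regularity has already been established:
\[
g(s;R,\beta) = \frac{\re^{\ri 3\pi/4}}{\sqrt{\pi}}\,h(s;R,\beta) - \ri k\sin\beta\, f(s;R,\beta).
\]
Since both $f(s;R,\beta)$ (by Lemma~\ref{hlem}) and $h(s;R,\beta)$ (by the discussion preceding the theorem) are analytic throughout the cut plane $\cC_{R,\beta}$, it will suffice to show $\cS^\delta_{R,\beta}\subset\cC_{R,\beta}$ to conclude analyticity of $g$ on $\cS^\delta_{R,\beta}$, after which the bound follows by estimating the two terms separately and applying the triangle inequality.

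First I would verify the required region containments. The strip condition $\left|\im{s}\right|<(1-\delta)R\sin\beta$ in the definition of $\cS^\delta_{R,\beta}$ keeps $s$ strictly below the branch points $R\re^{\pm\ri\beta}$ (which have imaginary part $\pm R\sin\beta$), and hence clear of the cuts emanating from them, so $\cS^\delta_{R,\beta}\subset\cC_{R,\beta}$ and both $f$ and $h$ are analytic on $\cS^\delta_{R,\beta}$. Next I would check that $\cS^\delta_{R,\beta}\subset\cR_{R,\beta}$, so that the bound \rf{eqn:fBound} applies. For $0<\beta<\tfrac{\pi}{2}$ this is immediate: $\cS^\delta_{R,\beta}$ is obtained from the set $\cS_{R,\beta}$ of \rf{eqn:SemiArrow} by shrinking the strip half-width from $R\sin\beta$ to $(1-\delta)R\sin\beta$, so $\cS^\delta_{R,\beta}\subset\cS_{R,\beta}\subset\cR_{R,\beta}$ by Lemma~\ref{hlem}. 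For $\tfrac{\pi}{2}\leq\beta<\pi$ the angular constraint $\left|\arg{s}\right|<\arctan\sqrt{(11+5\sqrt{5})/2}<\tfrac{\pi}{2}$ forces $\real{s}>0$, so $\cS^\delta_{R,\beta}$ lies in the right half-plane $H_{\pi/2}$, which Lemma~\ref{hlem} states is contained in $\cR_{R,\beta}$.

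With the containments in place the estimate is an assembly. On $\cS^\delta_{R,\beta}$ the bound \rf{eqn:fBound} gives $\left|f(s;R,\beta)\right|\leq C<2$, while the strip condition is precisely that required by the displayed estimate for $h$ preceding the theorem, giving $\left|h(s;R,\beta)\right|\leq Ck/\sqrt{kR\sin\beta}$ with $C$ depending only on $\delta$. Combining via the triangle inequality, and using $\sin\beta\leq 1$ to absorb $k\sin\beta\,\left|f\right|\leq Ck$, yields
\[
\left|g(s;R,\beta)\right| \leq \frac{1}{\sqrt{\pi}}\left|h(s;R,\beta)\right| + k\sin\beta\,\left|f(s;R,\beta)\right| \leq Ck\left(1+\frac{1}{\sqrt{kR\sin\beta}}\right),
\]
as claimed, the final constant depending only on $\delta$ since the two input constants do.

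I would expect the only genuine obstacle to be the region bookkeeping, in particular confirming that the single angular threshold $\arctan\sqrt{(11+5\sqrt{5})/2}$ works simultaneously in the regimes $0<\beta<\tfrac{\pi}{2}$ and $\tfrac{\pi}{2}\leq\beta<\pi$; but this is already discharged by the case analysis in Lemma~\ref{hlem}, so once that lemma and the $h$-estimate are granted the present theorem reduces to a straightforward combination. Were the $h$-estimate not available, the substantive work would instead lie there: bounding $h(s;R,\beta)=k\sin\beta\,(r(s)-R)/(2r(s)\mu(s))$ from \rf{eqn:dmudnDef} uniformly on the strip, which requires controlling $r(s)$ and $\mu(s)$ away from zero while staying clear of the branch points, with the degradation as $\beta\to 0,\pi$ accounting for the $1/\sqrt{\sin\beta}$ factor.
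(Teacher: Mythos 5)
Your proposal is correct and follows essentially the same route as the paper, which proves the theorem exactly by combining the decomposition $g = \tfrac{\re^{\ri 3\pi/4}}{\sqrt{\pi}}h - \ri k\sin\beta\, f$ with the bound on $f$ from Lemma~\ref{hlem} (via the containments $\cS^\delta_{R,\beta}\subset \cS_{R,\beta}\subset\cR_{R,\beta}$ for $0<\beta<\tfrac{\pi}{2}$ and $\cS^\delta_{R,\beta}\subset H_{\pi/2}\subset\cR_{R,\beta}$ for $\tfrac{\pi}{2}\leq\beta<\pi$) and the strip estimate for $h$, exactly as you describe. The region bookkeeping and triangle-inequality assembly you give are precisely the details the paper leaves implicit.
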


\section{Numerical results}
\label{sec:NumResults}
The theoretical error estimates obtained in \S\ref{sec:HNAApproxSpace} have been validated by computing numerical approximations to the best approximation error 
\begin{align}
\label{eqn:BestApproxError}
\inf_{q\in \mathcal{P}^p_{\rm nc}} \left\|V - q\right\|_{L^2\left(\Gamma_{\rm nc}\right)}
\end{align}
appearing in the error estimate \rf{eqn:BestApproxtildev}. A sample of the results obtained is given in Figure~\ref{fig:NumResults}. Here the best approximation error \rf{eqn:BestApproxError} was computed by $L^2\left(\Gamma_{\rm nc}\right)$-orthogonal projection onto $\mathcal{P}^p_{\rm nc}$ of the exact solution $V$, the Fresnel integrals required for the evaluation of $V$ being computed using the algorithm of \cite{AlChLP:13}. The local basis for $\mathcal{P}^p_{\rm nc}$ on each element of $\mathcal{M}_{\rm nc}$ comprised appropriately scaled and shifted Legendre polynomials, and the integrals arising in the orthogonal projection were computed using high order Gaussian quadrature. In all of our experiments we took \mbox{$L_{\rm nc}=3/2$}, $L_{\rm nc}'=1$, $\sigma = 0.15$, and $c=1$. 

\begin{figure}
\begin{center}
\subfigure[$\alpha=\tfrac{3\pi}{4}$]{
\includegraphics[scale=0.59]{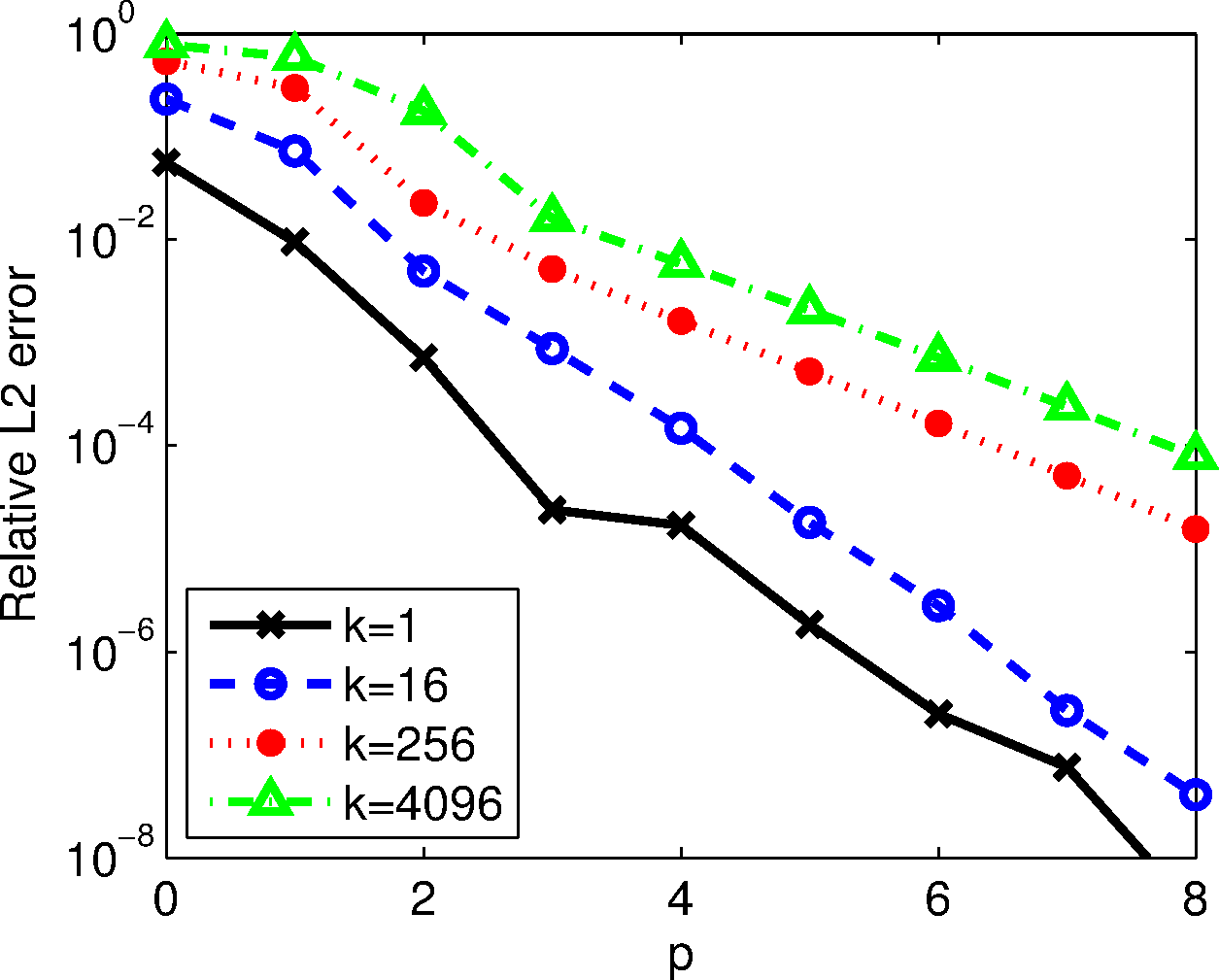}
}
\hs{2}
\subfigure[$k=16$]{
\includegraphics[scale=0.59]{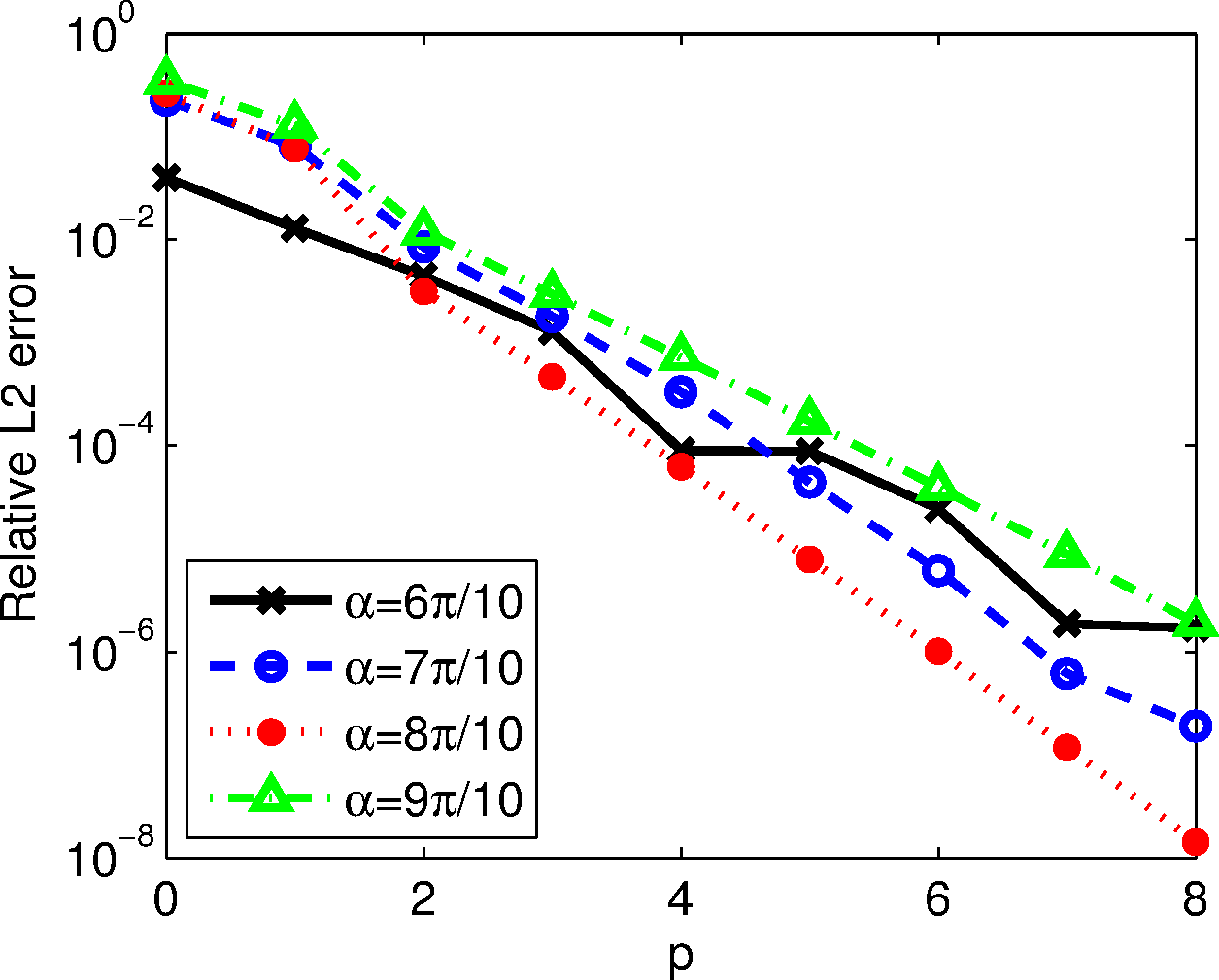}
}\\
\subfigure[$k=16$]{
\includegraphics[scale=0.59]{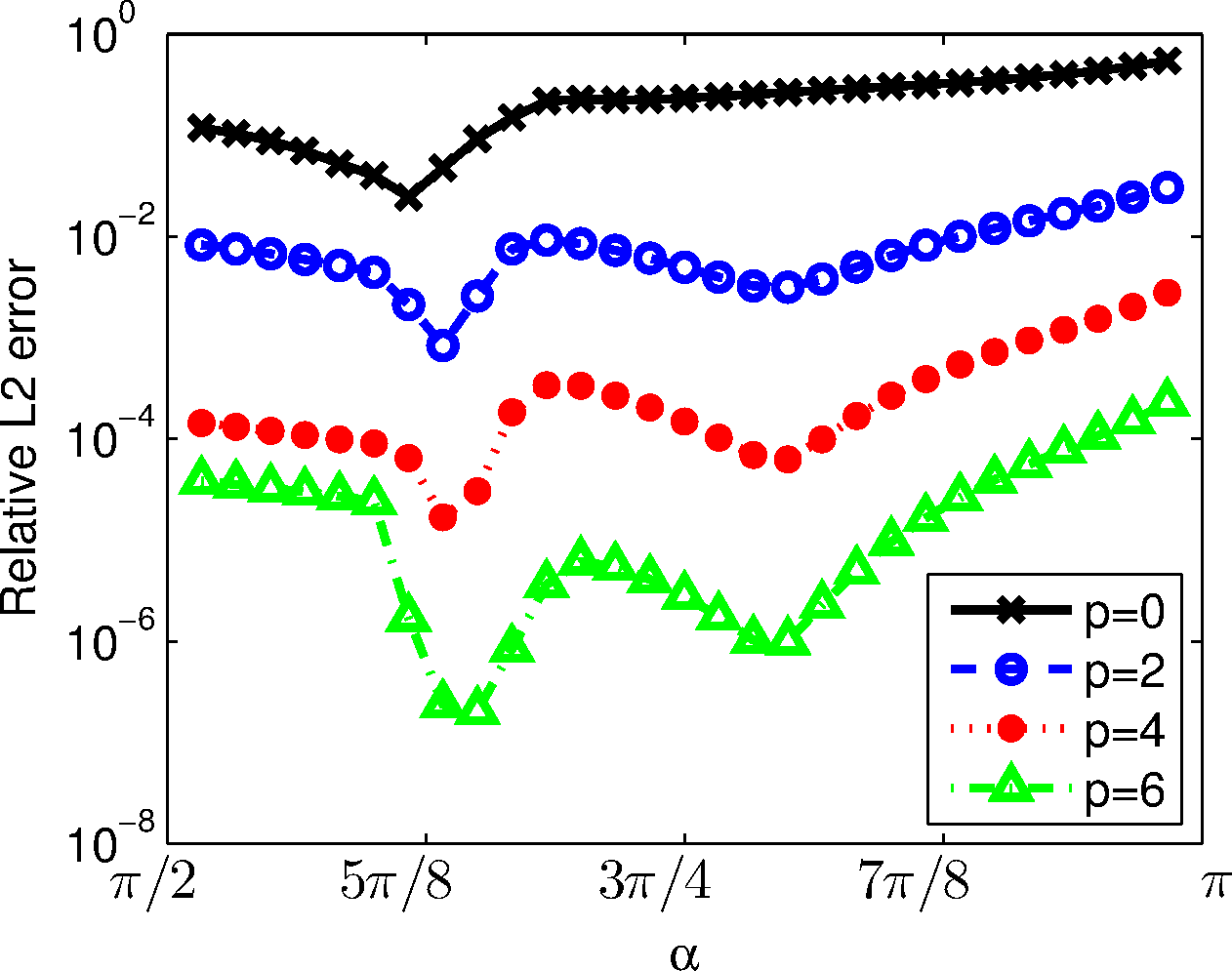}
}
\hs{2}
\subfigure[$k=16$]{
\includegraphics[scale=0.59]{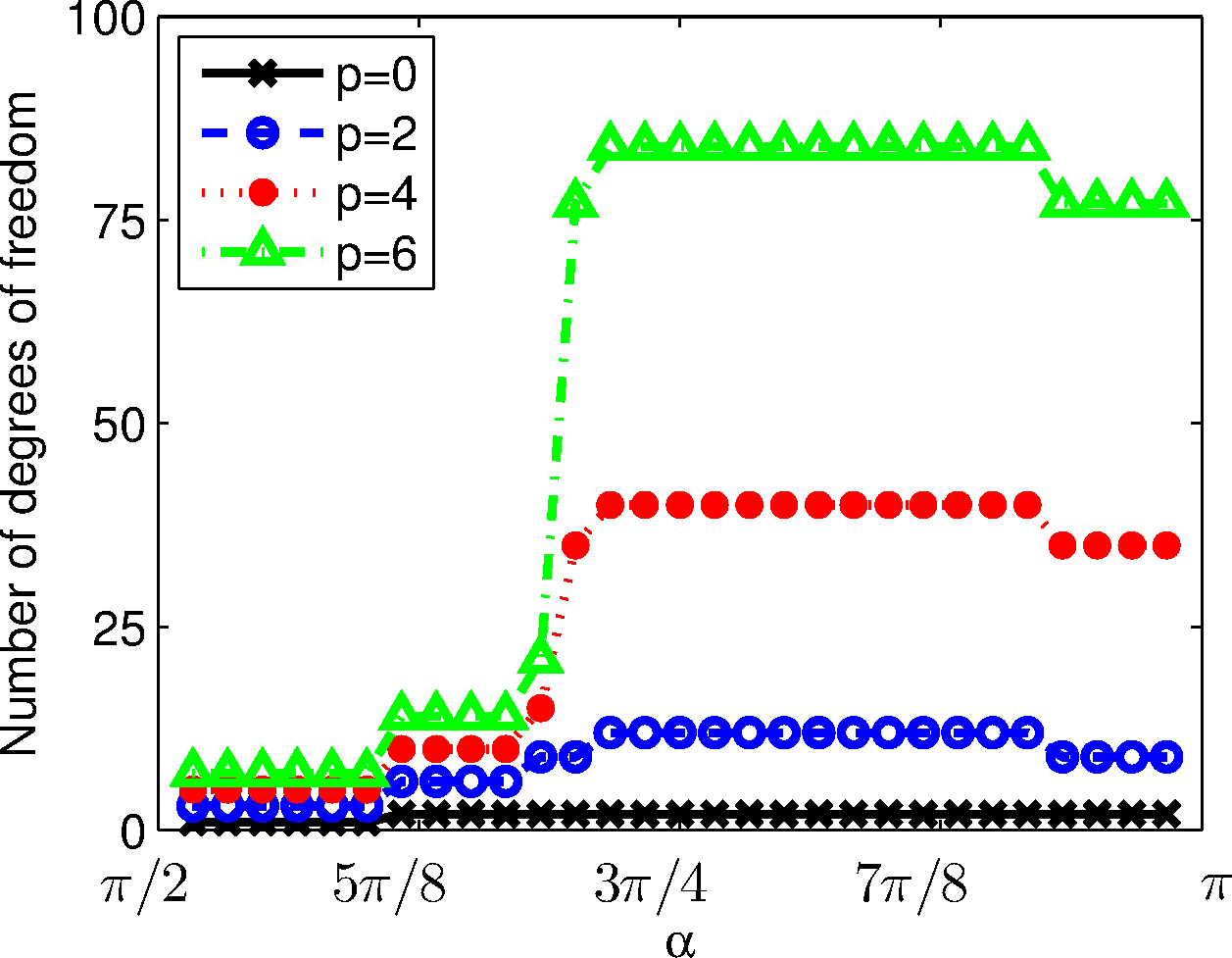}
}
\end{center}
\caption{Numerical results validating the best approximation error estimate \rf{eqn:BestApproxtildev}. Plots (a), (b) and (c) show how the relative error \rf{eqn:RelError} behaves as a function of $p$, $\alpha$ and $k$. Plot (d) shows the dependence of the number of degrees of freedom in $\mathcal{P}_{\rm nc}$ on $\alpha$. 
\label{fig:NumResults}
}
\end{figure}

Figure~\ref{fig:NumResults}(a) shows the relative error 
\begin{align}
\label{eqn:RelError}
\inf_{q\in \mathcal{P}^p_{\rm nc}} \left\|V - q\right\|_{L^2\left(\Gamma_{\rm nc}\right)}/\left\|V\right\|_{L^2\left(\Gamma_{\rm nc}\right)}
\end{align}
plotted against the polynomial degree $p$, for fixed incident angle $\alpha=\tfrac{3\pi}{4}$ and four different values of the wavenumber $k$. The exponential convergence predicted by \rf{eqn:BestApproxtildev} is clearly visible, and the error grows only relatively mildly as $k$ increases. Figure~\ref{fig:NumResults}(b) shows similar results for fixed $k$ and four different values of $\alpha$ between $\tfrac{\pi}{2}$ and $\pi$ (by symmetry it is sufficient to consider only $\alpha\in[\tfrac{\pi}{2},\pi]$). Figure~\ref{fig:NumResults}(c) investigates the dependence on $\alpha$ further, and confirms that, as predicted by our error estimate \rf{eqn:BestApproxtildev}, the best approximation converges uniformly in $\alpha$ as $p\to \infty$, with no blow-up in the error as $\alpha$ tends to $\tfrac{\pi}{2}$ or $\pi$, for example. Clearly the error is not independent of $\alpha$, but this is to be expected because the function $V$ and our approximation space $\mathcal{P}^p_{\rm nc}$ are both $\alpha$-dependent. Figure~\ref{fig:NumResults}(d) shows how the number of degrees of freedom in $\mathcal{P}_{\rm nc}$ depends on $\alpha$; one can clearly see the increase in the number of degrees of freedom as $\alpha$ increases through $\pi/2+\arctan{(L_{\rm nc}'/L_{\rm nc})}=\pi/2+\arctan{(2/3)}\approx (5.5)\times \tfrac{\pi}{8}$, the point at which the shadow boundary associated with the incident wave first intersects $\Gamma_{\rm nc}$. We also remark on the particularly small errors in Figure~\ref{fig:NumResults}(c) near $\alpha=\pi/2+\arctan{(L_{\rm nc}'/(3L_{\rm nc}/2))}=\pi/2+\arctan{(4/9)}\approx (5.1)\times \tfrac{\pi}{8}$, the point at which there is a mesh point exactly halfway along $\Gamma_{\rm nc}$ and the size of the largest element in $\mathcal{M}_{\rm nc}$ is at a minimum.
\section{Acknowledgements}
The author gratefully acknowledges support from EPSRC grant EP/F067798/1, and thanks Simon \mbox{Chandler-Wilde} and Stephen Langdon for helpful discussions in relation to this work.
\bibliography{BEMbib_short}
\bibliographystyle{siam}
\appendix
\section{Geometric meshes and polynomial approximation}
\label{app:meshes}
Given $-\infty<a<b<\infty$ and $p\in\N_0$, let $\mathcal{P}_p(a,b)$ denote the space of polynomials on $(a,b)$ of degree $\leq p$. A \emph{mesh} of $n\in\N$ elements on the interval $[a,b]$ is defined to be a set $\mathcal{M}=\{x_i\}_{i=0}^n$ such that $a=x_0<x_1<\ldots<x_n=b$. 
By the space of piecewise polynomials with degree $\leq p$ on the mesh $\mathcal{M}$ we mean the set
\begin{align*}
\label{}
\left\{P:[a,b]\to \C \,: \,P|_{(x_{i-1},x_i)} \in \mathcal{P}_{p}(x_{i-1},x_i),\, i=1,\ldots,n \right \}.
\end{align*}

In particular, given $L>0$ and $n\in\N$ we denote by $\mathcal{G}_n(0,L)=\{x_0,x_1,\ldots,x_n\}$ the geometric mesh on $[0,L]$ with $n$ layers, whose meshpoints $x_i$ are defined by
\begin{align*}
  x_0=0, \qquad x_i = \sigma^{n-i}L, \quad i=1,2,\ldots,n,
\end{align*}
where $0<\sigma<1$ is a fixed grading parameter.

Our best approximation estimates in this paper are based on the following standard result, which follows, e.g., from \cite[Theorem 2.1.1]{Stenger}.
\begin{lem}
\label{EllipseLem}
Let $a,b,d\in \R$ with $a<b$ and $d>b-a>0$. If the function $\phi$ is analytic and bounded in 
$\mathcal{E}_{a,b,d}=\left\{ s\in\mathbb{C}: |s-a|+|s-b|<d \right\}$ 
(the interior of the ellipse with foci $\{a,b\}$ and eccentricity $\epsilon=(b-a)/d<1$), then
\begin{align*}
\label{}
\inf_{q\in \mathcal{P}_p(a,b)}\norm{\phi-q}{L^\infty(a,b)}\leq\frac{2\rho^{-p}}{\rho-1}\norm{\phi}{L^\infty(\mathcal{E}_{a,b,d})}, 
\qquad \rho =1/\epsilon + \sqrt{1/\epsilon-1}>1.
\end{align*}
\end{lem}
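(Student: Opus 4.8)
The plan is to reduce the statement to the canonical setting of the interval $[-1,1]$ paired with a standard Bernstein ellipse, and then to invoke the classical polynomial approximation theorem quoted from \cite{Stenger}; the only real work is to track the geometry and the constants through an affine change of variable. First I would introduce the affine map $\ell(t)=\tfrac{a+b}{2}+\tfrac{b-a}{2}\,t$, which carries $[-1,1]$ bijectively onto $[a,b]$ and, since $\tfrac{b-a}{2}>0$ is a real scalar, is a similarity of the plane (a positive dilation followed by a translation). Writing $\tilde\phi=\phi\circ\ell$, the hypothesis that $\phi$ is analytic and bounded on $\mathcal{E}_{a,b,d}$ transfers to $\tilde\phi$ on the preimage $\ell^{-1}(\mathcal{E}_{a,b,d})$ with the same supremum, because $\ell^{-1}$ is conformal. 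As $\ell$ sends $-1,1$ to the foci $a,b$ and preserves eccentricity, this preimage is exactly the ellipse with foci $\pm1$ and eccentricity $\epsilon=(b-a)/d$.

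Next I would identify that preimage ellipse with a Bernstein ellipse $E_\rho=\{\tfrac12(z+z^{-1}):|z|=\rho\}$, whose semi-major axis is $\tfrac12(\rho+\rho^{-1})$ and whose foci are $\pm1$. Matching semi-major axes forces $\tfrac12(\rho+\rho^{-1})=1/\epsilon$, i.e. $\rho+\rho^{-1}=2/\epsilon$; the root exceeding $1$ of this relation is precisely the quantity $\rho$ named in the statement. The classical estimate then asserts that any function analytic and bounded on $E_\rho$ is approximated on $[-1,1]$ by polynomials of degree $\le p$ with error at most $\tfrac{2\rho^{-p}}{\rho-1}$ times its sup-norm on $E_\rho$; applied to $\tilde\phi$ this gives $\inf_{\tilde q\in\mathcal{P}_p(-1,1)}\|\tilde\phi-\tilde q\|_{L^\infty(-1,1)}\le \tfrac{2\rho^{-p}}{\rho-1}\|\tilde\phi\|_{L^\infty(E_\rho)}$.

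Finally I would transfer this back along $\ell$. Precomposition with the affine map $\ell$ is a linear bijection of $\mathcal{P}_p$ onto itself that preserves degree, so the two infima over polynomial spaces agree; and because $\ell$ maps $[-1,1]$ onto $[a,b]$ and $E_\rho$ onto $\mathcal{E}_{a,b,d}$ bijectively, the relevant $L^\infty$ norms are unchanged, giving $\inf_{q\in\mathcal{P}_p(a,b)}\|\phi-q\|_{L^\infty(a,b)}=\inf_{\tilde q}\|\tilde\phi-\tilde q\|_{L^\infty(-1,1)}$ and $\|\tilde\phi\|_{L^\infty(E_\rho)}=\|\phi\|_{L^\infty(\mathcal{E}_{a,b,d})}$. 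Chaining these identities with the estimate of the previous paragraph reproduces exactly the claimed bound.

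I do not anticipate a genuine obstacle, since the analytic substance of the result is packaged inside the cited theorem and the remaining steps are careful bookkeeping. Should one prefer a self-contained argument in place of the citation, the essential step would be to expand $\tilde\phi$ in a Chebyshev series, shift the coefficient integrals onto the contour $E_\rho$, and use the growth bound $|T_n(z)|\sim \rho^n/2$ there to obtain geometric decay $\ord{\rho^{-n}}$ of the coefficients; summing the truncated tail would then supply the factor $\rho^{-p}/(\rho-1)$. The single point meriting care along that route is the correspondence between the eccentricity $\epsilon$ and the Bernstein parameter $\rho$, namely the relation $\rho+\rho^{-1}=2/\epsilon$ fixed above, which is what converts the hypothesis "analytic inside $\mathcal{E}_{a,b,d}$" into the explicit convergence rate.
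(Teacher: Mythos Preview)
Your proposal is correct and aligns with the paper's treatment: the paper does not give a proof at all but simply notes that the lemma ``follows, e.g., from \cite[Theorem 2.1.1]{Stenger}'', and your affine reduction to the canonical interval $[-1,1]$ with the standard Bernstein ellipse is exactly the bookkeeping needed to make that citation fit the stated form. The only minor point worth flagging is that solving $\rho+\rho^{-1}=2/\epsilon$ gives $\rho=1/\epsilon+\sqrt{1/\epsilon^2-1}$, so the expression $\sqrt{1/\epsilon-1}$ printed in the statement appears to be a typo rather than an issue with your argument.
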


\end{document}